\documentclass[10pt,a4paper,British]{article}
\usepackage{amsmath}
\usepackage{amssymb}
\usepackage{amsthm}
\usepackage{graphicx}
\newtheorem{theorem}{Theorem}
\newtheorem{corollary}{Corollary}
\newtheorem{lemma}{Lemma}

\DeclareSymbolFont{AMSb}{U}{msb}{m}{n}
\DeclareMathSymbol{\bbbn}{\mathalpha}{AMSb}{"4E}
\DeclareMathSymbol{\bbbz}{\mathalpha}{AMSb}{"5A}
\DeclareMathSymbol{\bbbr}{\mathalpha}{AMSb}{"52}
\DeclareMathSymbol{\bbbq}{\mathalpha}{AMSb}{"51}
\DeclareMathSymbol{\bbbc}{\mathalpha}{AMSb}{"43}

\DeclareMathOperator{\unb}{unb}


\markboth{P. Verschueren \& B.D. Mestel}{Journal of Difference Equations and Applications}

\author{ Paul Verschueren$^{\ast}$\thanks{$^{\ast}$Corresponding author: paul@verschueren.org.uk}
and Ben  D. Mestel$^{\dag}$\thanks{$^{\dag}$Ben.Mestel@open.ac.uk}
\\Department of Mathematics and Statistics\\ The Open University\\ Milton Keynes\\ MK7 6AA, UK
}


\title{Fixed points of Composition Sum Operators}


%
%
%

\makeatother

\begin{document}


\maketitle
\begin{abstract}
In the renormalisation analysis of critical phenomena in quasi-periodic systems, a fundamental
role is often played by fixed points of functional recurrences of the form
\begin{equation*}
f_{n}(x) = \sum_{i=1}^\ell a_i(x) f_{n_i} (\alpha_i(x)) \,,
\end{equation*}
where the $\alpha_i$,  $a_i$ are known functions and the $n_i$ are given and satisfy $n-2 \le n_i \le n-1 $.
We develop a general theory of fixed points of ``Composition Sum Operators'' derived from such recurrences, and
apply it to test for fixed points in key classes of complex analytic functions with singularities.
Finally we demonstrate the construction of the full space of fixed points of one important class, for the much studied operator
\begin{equation*}
Mf(x) = f(-\omega x) + f(\omega^2 x + \omega)\,, \quad  \omega = (\sqrt{5}-1)/2\,.
\end{equation*}
The construction reveals previously unknown solutions.
\vskip 2\baselineskip

\noindent\emph{AMS 2010 Classification:} 30D05,37F25,47H09,47H10,47B33

\noindent \emph{Keywords:} Renormalisation, Composition Sum Operators, Fixed Point Theorems
\end{abstract}
\newpage{}


\section{Introduction\label{sec:intro}}


A composition sum operator $T$ is a linear operator acting on a vector
space $F$ of functions such that, for each function $f\in F$, the
image vector $Tf$, evaluated at $x$, is given by 
\begin{equation}
Tf(x)=\sum_{i=1}^{\ell}a_{i}(x)f(\alpha_{i}(x))\,,\label{eq:CSOs}
\end{equation}
where $\ell\ge1$, $\alpha_{1},\alpha_{2},\dots,\alpha_{\ell}$ are
affine contractions, and $a_{1}$, $a_{2}$, \dots, $a_{\ell}$ are
a fixed sequence of coefficients associated with $\alpha_{1},\alpha_{2},\dots,\alpha_{\ell}$.
A full definition is given in Section~\ref{sec:CSOs}. Although written
here as functions $a_{i}(x)$, in many applications the $a_{i}$ are
constants.

Composition sum operators have been studied extensively by Kuczma
and his co-workers. In particular, in the seminal monograph~\cite{Kuczma-et-al1990},
CSOs are discussed in detail in Chapter~6 ``Higher order equations
and linear systems'', principally in the real domain and in the ``cyclic
equation'' case, in which the $\alpha_{i}$ are iterates of a single
function. Kuczma \emph{et al} give some important existence and uniqueness
theorems in this context and we refer the reader to~\cite{Kuczma-et-al1990}
and to the references contained therein.

CSOs arise in several contexts, including the application of renormalisation
techniques to quasi-periodic non-linear dynamical systems and a toy-model
of magnetic flux growth in kinematic dynamo theory~\cite{Gilbert2002,Gilbert2005}.

Quasi-periodic systems are an important class of non-linear dynamical
systems which find application in many areas of the physical sciences.
In the simplest case, the dynamics are governed by an irrational number
$\omega\not\in\bbbq$, often called the rotation number or winding
number in the literature. It can often be identified as the ratio
of two incommensurate frequencies in the underlying system. Studies
of quasi-periodic systems often focus on the time-correlations between
system variables. These correlations, and, indeed other properties
of quasi-periodic systems, typically depend on the number-theoretic
properties of $\omega$, and, in particular, on the continued-fraction
expansion of $\omega$ and the associated rational convergents $p_{n}/q_{n}\to\omega$.
Examples of quasi-periodic systems include strange non-chaotic attractors,
the Harper equation and its generalisations, and other quantum mechanical
models depending on an underlying irrational rotation of the circle.

The correlation structure of quasi-periodic systems may be understood
by renormalisation analysis, leading to dynamical functional equations
which relate correlations at time $t$ to those at time $t+q_{n}$,
the dynamical properties of which depend on the dynamical behaviour
of the Gauss map applied to $\omega$ or, equivalently, on the action
of the shift map on the entries in the continued-fraction expansion
of $\omega$. In such studies the case of the golden-mean rotation
number, for which $\omega=(\sqrt{5}-1)/2$, often plays a pivotal
role. This is perhaps not surprising given the simplicity of its continued
fraction $[1,1,1,\dots]$, with all entries equal to $1$. For the
golden-mean, renormalisation analysis frequently leads to fixed-point
functional equations.

For example, renormalisation of correlations for the golden-mean Harper
equation leads to the so-called strong-coupling fixed point, satisfying
the functional equation 
\begin{equation}
f(z)=f(-\omega x)\, f(\omega^{2}x+\omega)\label{eq:SCFP}
\end{equation}
where $f$ is an analytic function with a pole of order $2$ at $z=1$
and $\omega=(\sqrt{5}-1)/2$. The construction of the strong-coupling
fixed point involves first studying fixed points of the composition
sum operator $M$ given by 
\begin{equation}
Mf(z)=f(-\omega z)+f(\omega^{2}z+\omega)\label{eq:M}
\end{equation}
where $f$ is analytic with a logarithmic singularity at $1$. (Here,
of course, $\ell=2$ and $a_{1}=a_{2}=1$ in equation~\eqref{eq:CSOs}.)

Note that equation~\eqref{eq:SCFP} is the fixed point case $f_{n}=f_{n-1}=f_{n-2}=f$
of the second-order multiplicative functional recurrence 
\begin{equation}
f_{n}(z)=f_{n-1}(-\omega x)\, f_{n-2}(\omega^{2}x+\omega)\,.\label{eq:multiplicative recurrence}
\end{equation}
Similarly, the associated linear recurrence 
\begin{equation}
f_{n}(z)=f_{n-1}(-\omega x)+f_{n-2}(\omega^{2}x+\omega)\label{eq:additive recurrence}
\end{equation}
leads in turn to the operator $M$. The functional recurrences~\eqref{eq:multiplicative recurrence}
and~\eqref{eq:additive recurrence} arise in several contexts involving
the golden-mean rotation number, in particular in connection with
the Ketoja-Satija orchid flower for the generalised Harper equation~\cite{Ketoja1995,Mestel2004a},
and, with piecewise constant functions $f_{n}$, in the analysis of
quantum two-level systems~\cite{Feudel-Pikovsky-Zaks1995}, barrier
billiards~\cite{Chapman2003} and strange non-chaotic attractors~\cite{Feudel-Pikovsky-Politi1996,Mestel2002}.
We refer to the comprehensive book by Feudel \emph{et al}~\cite{Feudel-Kuznetsov-Pikovsky2006}
for a full discussion of the applications of renormalisation theory
in strange non-chaotic attractors and to~\cite{Osbaldestin-Mestel2003}
for an overview of applications of equations~\eqref{eq:multiplicative recurrence}
and~\eqref{eq:additive recurrence}.

For other quadratic irrationals with constant continued-fraction expansion,
say $\omega=[a,a,a,a,\dots]$ where the integer $a\ge1$, we obtain
the multiplicative and additive fixed-point equations 
\begin{equation}
f(x)=\left(\prod_{i=0}^{a-1}f(-\omega x-i)\right)\; f(\omega^{2}x+a\omega)\,,\quad f(x)=\left(\sum_{i=0}^{a-1}f(-\omega x-i)\right)+f(\omega^{2}x+a\omega)\label{eq:multip_and_add_general_a}
\end{equation}
again with associated functional recurrences. See~\cite{Mestel2004b,Dalton-Mestel2003}
for applications in this case.

We now return to the golden-mean case. In~\cite{Mestel2000}, a rigorous
analysis established \emph{inter alia} the existence and properties
of a unique solution of~\eqref{eq:SCFP} under the constraints of
that physical situation, and provided an explicit expansion for the
strong-coupling fixed point. Indeed, writing $\phi_{1}(z)=-\omega z$
and $\phi_{2}(z)=\omega^{2}z+\omega$, this fixed point of $M$ has
the form: 
\begin{eqnarray*}
f(z) & = & \lambda\left[\log\left(\frac{1-z}{1-\omega}\right)+\sum_{k=1}^{\infty}\;\sum_{\substack{i_{1},...,i_{k}\\
i_{1}=1
}
}\log\frac{1-\phi_{i_{1}}\circ...\circ\phi_{i_{k}}(z)}{1-\phi_{i_{1}}\circ...\circ\phi_{i_{k}}(\omega)}\right]
\end{eqnarray*}
where $i_{j}\in\{1,2\}$, $\lambda\in\mathbb{C},$ $\log$ is the
principal branch of the logarithm, and $\circ$ signifies functional
composition.

The proof in~\cite{Mestel2000} is non-trivial and also depends on
properties of the golden mean so that its generalisation is not obvious.
The theory presented in this paper provides a general framework for
the construction of fixed-points of composition sum operators, which
not only illuminates the results of~\cite{Mestel2000}, but enables
the construction of fixed-points of other renormalisation operators
in a simplified and unified manner.

The organisation of this paper is as follows. We first describe in
Section~2 a general theory for the construction of fixed points of
linear operators on vector spaces. In Section~3 we introduce formally
the Composition Sum Operators (CSOs), describe their properties, and
define the function spaces of analytic functions on which we shall
work. We introduce the idea of a seed function, which we will use
extensively in the construction of fixed points of these operators.
In Section~4, we apply the theory in Section~2 to construct fixed
points of CSOs in the constant coefficient affine case (which we simply
call affine CSOs). Finally, in Section~5, we show how the theory
can be applied to construct fixed-points arising in the renormalisation
theory of quasi-periodic systems. This final section uses a construction
method derived from the methods in~\cite{Mestel2000}. We conclude
with some directions for future research.


\section{Seeded fixed point theory on vector spaces \label{sec:seeded-fixed-point-thy}}

In this section we describe the formal abstract setting for our construction
of fixed-points of linear operators on vector spaces. Our goal is
to derive from a given linear operator $T$, a \emph{fixed point operator}
$\widehat{T}$ which maps its domain (called the \emph{seed space}
of $T$) to the \emph{fixed point space} of $T$ (denoted $FP(T)$).

Although we have in mind applications to composition operators on
spaces of complex analytic functions with singularities, the theory
is quite general and may used in cases in which linear operators act
on a vector space that may be decomposed into a direct sum of subspaces,
one with a well defined Banach space structure and no non-zero fixed
points, and the other consisting of unbounded or singular vectors,
but which are prototype fixed points or ``seeds''. Although, set
in this general context, the theory is straightforward, its power
lies in its application to construct fixed points of renormalisation
operators and other operators, in which the fundamental structure
of the fixed points are evident, but the precise detail is not.

Let $F$ be a vector space and let $G\subset F$ be a proper non-zero
subspace of $F$. In many cases $G$ is equipped with norm $\|\cdot\|$
which endows $G$ with a Banach space structure, but this is not necessary
for the general theory. Let $T:F\to F$ be a linear operator and let
$\overline{T}$ denote the operator $I-T$ on $F$. We note that $f\in F$
is a fixed point of $T$ if, and only if, it is in the kernel of $\overline{T}$.

We assume that $T$ satisfies the following two properties. 
\begin{enumerate}
\item P1 $\overline{T}(F)\subset G$, so that $\overline{T}$ maps the whole
of $F$ into the subspace $G$. 
\item P2 The restricted operator $\overline{T}_{|G}$ is invertible on $G$
so that $T^{+}=\overline{T}_{|G}^{-1}$ exists and maps $G$ to $G$. 
\end{enumerate}
We note three points. First, when $G$ has a Banach space structure
with norm $\|\cdot\|$, then the second condition is satisfied when
$T_{|G}$ is a contraction with $\|T\|<1$ (but this is not a necessary
condition). Second, although these two conditions are very general,
one can often think of $G$ as the well-behaved non-singular part
of $F$ and $F\backslash G$ as being the singular or unbounded part
of $F$, which provides seeds for the construction of non-zero fixed
points of $T$. Third, writing $F$ explicitly as the direct sum $F=G\oplus S$,
we will see below that the vector space $S$ is a subspace of the
seed space which is mapped one to one to the fixed points of $T$
by the fixed point operator.

The following is the principal result for the construction of fixed
points of the operator $T$.

\begin{theorem}\label{thm:fixed_point_thm} Let $F$ be a vector space
and $T:F\to F$ be a linear operator satisfying the conditions P1
and P2 above. Then the linear operator $\widehat{T}:F\to F$ given
by 
\[
\widehat{T}=I-T^{+}\overline{T}
\]
maps $F$ to the subspace $FP(T)$ of fixed points of $T$ and $\widehat{T}$
induces a vector space isomorphism from the factor space $F/G$ to
$FP(T)$. In the case when $F=G\oplus S$, $\widehat{T}$ induces
an isomorphism from $S$ to $FP(T)$. \end{theorem}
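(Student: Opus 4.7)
The plan is to verify the claimed properties of $\widehat{T}$ in a short sequence of algebraic checks, each using only one of P1 and P2.

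First I would show $\widehat{T}(F) \subseteq FP(T)$, i.e.\ that $\overline{T}\widehat{T}f = 0$ for every $f \in F$. Expanding,
\[
\overline{T}\widehat{T}f = \overline{T}f - \overline{T}T^{+}\overline{T}f .
\]
By P1, $\overline{T}f \in G$, so $T^{+}\overline{T}f$ is well-defined and lies in $G$; by P2, $\overline{T}T^{+}$ is the identity on $G$, hence $\overline{T}T^{+}\overline{T}f = \overline{T}f$, giving $\overline{T}\widehat{T}f = 0$.

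Next I would identify the image and kernel of $\widehat{T}$. Surjectivity onto $FP(T)$ is immediate: if $f \in FP(T)$ then $\overline{T}f = 0$, so $\widehat{T}f = f$, which both shows every fixed point lies in the image and shows $\widehat{T}$ acts as the identity on $FP(T)$ (a fact worth recording because it implies $\widehat{T}$ is idempotent). For the kernel, $\widehat{T}f = 0$ means $f = T^{+}\overline{T}f$; since $T^{+}$ takes values in $G$ by P2, this forces $f \in G$. Conversely, if $f \in G$ then $\overline{T}f = \overline{T}_{|G}f$, so $T^{+}\overline{T}f = f$ by P2, whence $\widehat{T}f = 0$. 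Thus $\ker \widehat{T} = G$.

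From $\widehat{T}(F) = FP(T)$ and $\ker\widehat{T} = G$, the first isomorphism theorem for vector spaces yields a well-defined linear isomorphism $F/G \to FP(T)$ induced by $\widehat{T}$. In the decomposition $F = G \oplus S$, the projection $S \hookrightarrow F \twoheadrightarrow F/G$ is itself an isomorphism, so the composition with the induced map gives the restriction $\widehat{T}|_{S} : S \to FP(T)$ as a vector space isomorphism, proving the final assertion.

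No step looks technically hard; the only place where one must be attentive is the identity $\overline{T}T^{+}\overline{T}f = \overline{T}f$ in the first step, which is precisely the point at which both hypotheses P1 and P2 are used simultaneously (P1 placing $\overline{T}f$ into the domain of $T^{+}$, and P2 ensuring $\overline{T}T^{+}$ acts as the identity there). Everything else is formal manipulation and the first isomorphism theorem.
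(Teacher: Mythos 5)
Your proposal is correct and follows essentially the same route as the paper: verify $\overline{T}\widehat{T}f=0$ using P1 and P2, observe $\widehat{T}$ fixes $FP(T)$ pointwise (giving surjectivity), identify $\ker\widehat{T}=G$, and pass to the quotient $F/G\cong S$. Your version is marginally more explicit in checking both inclusions $\ker\widehat{T}\subseteq G$ and $G\subseteq\ker\widehat{T}$, and in naming the first isomorphism theorem where the paper verifies injectivity and surjectivity of the induced map directly, but these are presentational rather than substantive differences.
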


The proof of this theorem is quite straightforward and belies the
utility and power of the theorem itself.

\begin{proof}

Let $f\in F$ and consider $\widehat{T}f$. Then $\overline{T}(\widehat{T}f))$
$=$ $\overline{T}f-\overline{T}(T^{+}\overline{T})f$ = $\overline{T}f-\overline{T}f=0$
so that $\widehat{T}f$ is a fixed point of $T$. Conversely, let
$f\in F$ be a fixed point of $T$. Then $\overline{T}f$ $=$ $0$
so that $\widehat{T}f=f$, and so $f\in\widehat{T}F$. It is straightforward
to show that $FP(T)$ is a linear subspace of $F$.

Now let us abuse notation slightly and also denote by $\widehat{T}$
the map $\widehat{T}:F/G\to FP(T)$ given by $\widehat{T}[f]=\widehat{T}f$,
for $[f]$ an element of $F/G$. It is straightforward to show that
$\widehat{T}$ is a linear map and we note that this map is well defined
because $G$ is in the kernel of $\widehat{T}$. It is immediate that
$\textrm{Im}~\widehat{T}$ $=FP(T)$ . Now let $[f]\in F/G$ be in
$\textrm{ker}~\widehat{T}$. Then $\widehat{T}f=0$, so that $f=T^{+}\overline{T}f$
whence $f\in G$ and so $[f]=[0]$. It follows that $\widehat{T}$
is a vector space isomorphism.

Finally, in the case $F=G\oplus S$, $S$ is isomorphic to $F/G$
via the natural inclusion, and so $\widehat{T}$ induces a vector
space isomorphism $S$ to $FP(T)$. \end{proof}

Note that $\widehat{T}$ is now our fixed point operator derived from
$T$, and its seed space (domain) is $\overline{T}^{-1}G$, ie $f$
is a seed if, and only if, $f\in\overline{T}^{-1}G$. This also means
$\widehat{T}f=f+g$ for $g=-T^{+}\overline{T}f\in G$.

There is also a straightforward but important extension which allows
us to extend the operator $\widehat{T}$.

\begin{corollary}\label{cor:FixedPointThmExtension} Suppose that condition
P2 holds, but not P1, so that we do not necessarily have $T(F)\subseteq G$.
Suppose, instead, that for some $f\in F$, $\overline{T}f\not\in G$,
but $\overline{T}(T^{k}f)\in G$ for some integer $k\ge1$. Writing
$f_{k}=\sum_{i=0}^{k-1}\overline{T}(T^{i}f)$, then $\overline{T}(f-f_{k})\in G$
and $\widehat{T}(f-f_{k})$ is a fixed point of $T$. Moreover, the
fixed point is independent of the choice of $k$. \end{corollary}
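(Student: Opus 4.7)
The plan is to begin by simplifying $f_k$. Since $\overline{T} = I - T$, each summand $\overline{T}(T^i f) = T^i f - T^{i+1} f$, so the sum telescopes to $f_k = f - T^k f$. Hence $f - f_k = T^k f$ and $\overline{T}(f - f_k) = \overline{T}(T^k f) \in G$ by hypothesis, which makes the expression $\widehat{T}(f - f_k) = T^k f - T^+ \overline{T}(T^k f)$ well defined despite the failure of P1. To verify that this vector lies in $FP(T)$, I would rerun exactly the calculation from Theorem~\ref{thm:fixed_point_thm}: apply $\overline{T}$ and use that $\overline{T} T^+$ is the identity on $G$ (from P2), obtaining $\overline{T}(\widehat{T}(f - f_k)) = \overline{T}(T^k f) - \overline{T}(T^k f) = 0$.

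The new content is the independence from $k$. The key auxiliary observation is that P2 alone already forces $T(G) \subseteq G$: for any $g \in G$, the identity $Tg = g - \overline{T}g$ exhibits $Tg$ as a difference of two elements of $G$, since $\overline{T}_{|G}$ maps $G$ to $G$ by P2. Combined with $\overline{T}(T^{j+1} f) = T(\overline{T}(T^j f))$, this shows the hypothesis propagates: if $\overline{T}(T^k f) \in G$ for one valid $k$, then the same holds for every larger index. So it suffices to compare consecutive valid indices.

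For this I would set $h = T^k f$ and compute
\[
\widehat{T}(h) - \widehat{T}(Th) = \overline{T}(h) - T^+\bigl(\overline{T}(h) - \overline{T}(Th)\bigr).
\]
A second telescoping, $\overline{T}(h) - \overline{T}(Th) = \overline{T}(\overline{T}(h))$, together with the fact that $\overline{T}(h) \in G$ and $\overline{T}$ preserves $G$, lets me apply $T^+\overline{T} = I_G$ to conclude that the bracketed term equals $\overline{T}(h)$, so the whole difference vanishes. Iterating gives $\widehat{T}(T^{k_1} f) = \widehat{T}(T^{k_2} f)$ for any two valid $k_1 < k_2$, which is the claimed independence.

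The main obstacle is bookkeeping rather than real difficulty: one has to notice the two telescopings and, crucially, that P2 by itself already gives $T(G) \subseteq G$, which is the ingredient that lets the hypothesis $\overline{T}(T^k f) \in G$ be stable under increasing $k$. Once this is in place the arguments reduce to two direct applications of the identities $\overline{T} T^+ = T^+ \overline{T} = I$ on $G$, exactly as in the proof of the main theorem.
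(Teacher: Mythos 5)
Your proof is correct and follows essentially the same route as the paper's: the telescoping identity $f_k = f - T^k f$, the observation that $T^k f$ is then a seed, and the use of $T(G)\subseteq G$ to propagate the hypothesis and kill the difference between two choices of $k$. The only (welcome) addition is that you explicitly derive $T(G)\subseteq G$ from P2 via $Tg = g - \overline{T}g$, a fact the paper uses without comment; your final step compares consecutive indices by direct computation where the paper instead notes $f_k - f_{\tilde k}\in G \subseteq \ker\widehat{T}$, but these are the same argument in different clothing.
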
 \begin{proof}Since
$f_{k}=\sum_{0}^{k-1}\overline{T}(T^{i}f)=f-T^{k}f$, it is immediate
that $\overline{T}(f-f_{k})=\overline{T}(T^{k}f)\in G$, and so $f_{k}-f$
is a seed and we may now apply Theorem~\ref{thm:fixed_point_thm}.
The final statement follows from the observation that, if $\tilde{k}\ge k$,
then $\overline{T}T^{\tilde{k}}f$ $=$ $T^{\tilde{k}-k}\overline{T}T^{k}f\in G$,
since $T(G)\subseteq G$. It follows that $\widehat{T}(f-f_{\tilde{k}})-\widehat{T}(f-f_{k})$
$\widehat{T}(f_{k}-f_{\tilde{k}})$ $=0$, since $f_{k}-f_{\tilde{k}}\in G$.
\end{proof} We call a vector satisfying the hypotheses of Corollary\ref{cor:FixedPointThmExtension}
a \emph{generalised seed}.

As a simple application of Theorem~\ref{thm:fixed_point_thm}, we
consider the operator $T$ on real functions $c(x)$ defined on $[-1,1]$
\[
Tc(x)=c\left(\frac{x-1}{2}\right)-c\left(\frac{1-x}{2}\right)\,.
\]
This operator arises from the zero-shear base case of a the Stretch-Fold-Shear
toy model in kinematic dynamo theory, studied in detail by Gilbert~\cite{Gilbert2002,Gilbert2005}.

Now, for integer $n\ge1$, let $P_{2n-1}^{o}$ denote the real vector
space of odd polynomials of degree at most $2n-1$. Then, evidently,
$TP_{2n-1}^{o}\subseteq P_{2n-1}^{o}$, and, indeed, $T$ has an upper-triangular
matrix with respect to the standard basis $\{x,x^{3},\dots,x^{2n-1}\}$,
from which the spectrum of $T$ restricted to $P_{2n-1}^{o}$ is readily
obtained. Let us now consider the operator $T$ from the viewpoint
of Theorem~\ref{thm:fixed_point_thm}. Writing $P_{2n-1}^{o}=P_{2n-3}^{o}\oplus<x^{2n-1}>$
and $T_{2}=4^{n-1}T$, then it is straightforward to verify that the
hypotheses of Theorem~\ref{thm:fixed_point_thm} are satisfied with
$G=P_{2n-3}^{o}$ and $S=<x^{2n-1}>$, from which the spectrum and
eigenfunctions of $T$ on $P_{2n-1}^{o}$ may be calculated. In fact,
this is also the spectrum of $T$ acting on a more general space of
analytic functions on which $T$ is compact. See~\cite{Gilbert2002,Gilbert2005}
for details.


\section{Composition sum operators, analytic function spaces, and seed functions
\label{sec:CSOs}}

We can now apply the general theory of the previous section to help
us identify fixed points of the particular class of operators we call
\emph{Composition Sum Operators}. The operators $M$ and $T$ introduced
above are examples of this class.

Let $\alpha:D\longrightarrow D$ be a map of a complex domain into
itself, and let $F$ be a ring of complex-valued functions defined
on $D$. In practical applications, $D$ is frequently a disc and
$F$ a space of analytic functions on $D$, possibly with singularities.

For any $f\in F$ we denote by $\alpha^{*}f$ the function $f\alpha$,
the composition of $f$ with $\alpha$. Then $\alpha^{*}$ is an operator
on $F$, which we call a \emph{Composition Operator} on $F$.

A \emph{Composition Sum Operator (CSO)} on $F$ is an operator $T=\sum_{i=1}^{\ell}a_{i}\alpha_{i}^{*}$
on $F$ where $a_{i}\in F$, $a_{i}\not=0$, $\alpha_{i}^{*}$ is
a Composition Operator on $F$, and $\left(\sum a_{i}\alpha_{i}^{*}\right)f=\sum a_{i}.f\alpha_{i}$.
We call the positive integer $\ell$ the \emph{length} of the CSO,
and we assume that $\alpha_{i}\not=\alpha_{j}$, for $i\not=j$. When
it is clear from the context, we suppress the explicit range $i=1$,
$\dots$, $\ell$.

In the case when $F$ is Banach space with norm $\left\Vert \cdot\right\Vert $,
we have $\left\Vert Tf\right\Vert =\left\Vert \sum a_{i}.f\alpha_{i}\right\Vert \le\sum\left\Vert a_{i}\right\Vert \left\Vert f\right\Vert $
so $\left\Vert T\right\Vert \le\sum\left\Vert a_{i}\right\Vert <\infty$
so a CSO is also a bounded linear operator on these spaces. However,
the most interesting cases occur when $T$ is an operator on a space
of functions with singularities.

Although CSOs may be defined for general maps $\alpha_{i}$, in most
applications they are affine contractions and the coefficients $a_{i}$
are constant functions. We say that the CSO $T=\sum_{i=1}^{\ell}a_{i}\alpha_{i}^{*}$
is \emph{affine} if each $a_{i}$ is a constant, and each $\alpha_{i}$
is an affine contraction, ie., $\alpha_{i}(z)=s_{i}(z-z_{i})+z_{i}$,
where $z_{i}\in D$ and $|s_{i}|<1$, $s_{i}\in\bbbc$.

Affine CSOs are the principal application of the seeded fixed point
theory described above. In the next section we develop the theory
of seed functions to construct fixed points of CSOs with singularities
of pole and logarithmic type. We concentrate on these as they currently
seem the most significant; however the techniques presented are readily
extended to other types of singularity such as removable and algebraic
singularities.

\subsection{Seed functions over spaces of bounded functions}

In this section we look at the conditions under which a complex function
$f$ with various types of singularities can be a seed for a CSO $T$
over a Banach space of analytic functions $G$. In particular this
requires that $\overline{T}f\in G$. This is a strong condition as
we shall see. In this context we refer to seeds as \emph{seed functions}.
We also recall the important result that that $\widehat{T}f=f+g$
for some $g\in G$.

We will show that, apart from a small set of CSOs which admit polynomials
as fixed points, there are no analytic non-zero fixed points of an
affine CSO in $G$. All other non-zero fixed points therefore have
singularities of some sort. For some CSOs with real coefficients the
singularities can be discontinuities (see~\cite{Mestel2002} for
an example), but in the context of CSOs defined on spaces of analytic
functions (with singularities) it is logarithmic and unbounded isolated
singularities which are of greatest interest, and we will analyse
these below.

To be precise, by unbounded isolated singularities we mean poles and
essential singularities, ie not the singularities of multivalued functions
such as $\log z$, or $z^{\alpha}$ for non-integral $\alpha$. And
by logarithmic singularities, we mean the unbounded singularities
of functions which can be written $\log g$ with $g$ analytic, ie
not composed log functions such as $\log\log z$ or $(\log z)^{\alpha}$.
We will refer to functions analytic apart from these singularities
as\label{PESL} (Pole/Essential/Simple Log) functions.

Consider a fixed affine CSO acting on functions on a domain $D\subseteq\bbbc$
and $G$ a Banach space of analytic functions on $D$. We consider
a complex valued function defined and analytic on an open dense subset
$D'$ of $D$. The set of points on which $f$ fails to be analytic
might include, for example, points on branch cuts other than branch
points. We therefore restrict our discussion to unbounded singularities
ie points at which $f$ fails to be bounded on any neighbourhood.

We say that $z_{0}\in D-D'$ is an \emph{unbounded singularity} if
$z_{0}$ has no neighbourhood $U\subseteq D$ on which $f_{|U\bigcap D'}$
is bounded, ie $\sup_{z\in U\bigcap D'}|f(z)|=\infty$ for any neighbourhood
$U$ of $z_{0}$. in $D$. The \emph{unbounded singularity set} of
$f$ in $D$, written $\unb_{D}(f)$ is the (possibly empty) set of
points in $D$ at which $f$ has an unbounded singularity.

Recall that $f$ is a seed function if $\overline{T}f\in G$. It follows
that, if $f$ is a seed function, then $\unb_{D}(f)$ $=$ $\unb_{D}(Tf)$,
because each function in $G$ has no unbounded singularities in $D$.

In principle, $\unb_{D}(f)$ may be quite large and composition operators
may act and interact on the set in intricate ways which are beyond
the scope of this paper. We will restrict our attention to \emph{simple\label{simple}}
actions which we define as follows: we will say that $\unb_{D}(f)$
is \emph{simple} under $T$ if (i) each $\alpha_{i}$ acts unstably
on $\unb_{D}(f)$, (ie for $z\in unb_{D}(f)$, $\alpha_{i}(z)\not\in unb_{D}(f)$
unless $\alpha_{i}(z)=z$) and (ii) if $\alpha_{i}(z)=z$ then $\alpha_{j}(z)\ne z$
for $j\ne i$. We also say $f$ itself is simple under $T$ if $\unb_{D}(f)$
is simple under $T$.

From these results the following lemma follows readily. \begin{lemma}
\label{lem:UnboundedPoints}Let $T=\sum_{i}a_{i}\alpha_{i}^{*}$ be
an affine CSO, and let $f$ be a seed over a Banach space $G$ of
analytic functions. If the unbounded set of $f$ is simple under $T$,
then $\unb_{D}(f)\subseteq\bigcup_{i}FP(\alpha_{i})$, and each unbounded
point of $f$ is a fixed point of precisely one $\alpha_{i}$. \end{lemma}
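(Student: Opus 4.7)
The plan is to exploit the seed condition to localise unbounded points and then read off the conclusion from the two clauses of the simplicity hypothesis. First, since $f$ is a seed, $\overline{T}f = f - Tf$ lies in $G$, a space of analytic functions with no unbounded singularities in $D$. Hence $\unb_D(f) = \unb_D(Tf)$, a fact already noted in the text immediately before the lemma.

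Next, I fix $z_0 \in \unb_D(f)$ and analyse $Tf = \sum_{i=1}^{\ell} a_i \, f \circ \alpha_i$ near $z_0$. The key observation is that each $\alpha_i$ is affine (in particular continuous) and each $a_i$ is a nonzero constant, so $a_i \, f\circ \alpha_i$ is bounded on some neighbourhood of $z_0$ whenever $f$ is bounded on some neighbourhood of $\alpha_i(z_0)$. If it were the case that $\alpha_i(z_0) \notin \unb_D(f)$ for every $i$, then each summand would be bounded near $z_0$ and hence so would $Tf$, contradicting $z_0 \in \unb_D(Tf)$. Therefore at least one index $i$ satisfies $\alpha_i(z_0) \in \unb_D(f)$.

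Finally, I invoke the simplicity hypothesis. Clause (i) forces $\alpha_i(z_0) = z_0$ for any such index, establishing $z_0 \in FP(\alpha_i)$ and hence $\unb_D(f) \subseteq \bigcup_i FP(\alpha_i)$. Clause (ii) then says that if $\alpha_i$ fixes $z_0$, no other $\alpha_j$ can, giving the uniqueness part of the claim.

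The one subtlety worth flagging is that, \emph{a priori}, one might fear the unbounded summands of $Tf$ could conspire to cancel and produce a bounded sum; this does not obstruct the argument because I only need the contrapositive, namely that boundedness of every summand near $z_0$ forces boundedness of the whole sum, not the converse direction. Consequently there is no genuine technical obstacle, and the lemma reduces to a careful reading of the two clauses of the simplicity definition against the unboundedness identity $\unb_D(f) = \unb_D(Tf)$.
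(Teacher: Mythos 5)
Your proof is correct and follows essentially the same route as the paper: use the seed condition to get $\unb_{D}(f)=\unb_{D}(Tf)$, deduce that at least one summand $f\circ\alpha_{i}$ is unbounded at $z_{0}$, and then apply clauses (i) and (ii) of simplicity to get $\alpha_{i}(z_{0})=z_{0}$ and uniqueness. Your extra remark about cancellation only mattering in the converse direction is a sensible clarification of a step the paper leaves implicit, but it does not change the argument.
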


\begin{proof} If $z_{0}\in\unb(f)$ then $z_{0}\in\unb(Tf)$ so $\sum_{i}a_{i}.f\alpha_{i}$
is unbounded at $z_{0}$, so, for at least one $i$, $f\alpha_{i}$
is unbounded at $z_{0}$. Since the unbounded set of $f$ is simple,
this means $\alpha_{i}(z_{0})=z_{0}$ and the $i$ is unique. \end{proof}

As an example, consider the CSO $M$ above. Its fixed points are $\{0,1\}$.
So any simple seed or fixed point of $M$ is unbounded on at most
$\{0,1\}$.

\section{Fixed-point construction for affine CSOs}

Recall that an affine CSO is a CSO for which the functions $a_{1}$,
$a_{2}$, $\dots$, $a_{\ell}$ are non-zero constants and the maps
$\alpha_{1}$, $\alpha_{2}$, $\dots$, $\alpha_{\ell}$ are affine
contractions $\alpha_{i}(z)=z_{i}+s_{i}(z-z_{i})$ on the complex
plane, where the fixed points $z_{i}$ and and contraction rates $s_{i}$
are all complex constants, with $0\le|s_{i}|<1$. (Here, and in what
follows, $i$ ranges from $1,\dots,\ell$.) In many applications the
$z_{i}$ and the $s_{i}$ are real, but the theory may be just as
easily developed for complex $z_{i}$ and $s_{i}$. With affine CSOs
we are able to obtain a good theory for the construction of fixed
points, drawing on the work of the previous sections.

We define here an additional constraint which which will prove useful
in this section. Given a CSO $T=\sum a_{i}\alpha_{i}^{*}$ on a domain
$D$, we will say $\alpha_{i}$ is \emph{fixed point independent on
$D$} if $z_{i}\not\in\bigcup_{j\ne i}\overline{\alpha_{j}(D)}$.
This means that if $z_{i}$ is a singularity of a function $f$, $f\alpha_{j}$
has a singularity at $z_{i}$ if, and only if, $j=i$.

We shall work in a fixed disc in the complex plane. Let $D=D_{r}$,
the open disc of radius $r$ about $0$ in $\bbbc$, and let $G(D_{r})$
be the complex Banach space of functions $g$ analytic on $D_{r}$
with finite supremum norm $||g||_{\infty,r}=\sup\{|g(z)|\;:\; z\in D_{r}\}$.
Let $R>0$ be chosen so that for some $\delta>0$, $\overline{\alpha_{i}(D_{R+\delta})}\subseteq D_{R-\delta}$
for all $i=1,\dots,\ell$. Because the $\alpha_{i}$ are contractions,
this condition holds provided we take $R$ sufficiently large. We
write $D=D_{R}$, and $G=G(D_{R})$ and $||g||_{\infty}$ $=$ $||g||_{\infty,R}$,
for $g\in G(D_{R})$.

Let us consider the affine Composition Sum Operator $Tf(z)=\sum_{i=1}^{\ell}a_{i}f(\alpha_{i}(z))$.
It is straightforward to verify that $T$ is a linear operator on
the complex Banach Space $G$. Moreover, for $m\ge0$, we may differentiate
$m$ times the function $Tg$: 
\begin{equation}
(Tg)^{(m)}(z)=\sum_{i=1}^{\ell}a_{i}s_{i}^{m}g^{(m)}(\alpha_{i}(z))\,.
\end{equation}
We now define an induced operator on $G$, which we denote by $T^{(m)}$,
given by 
\begin{equation}
T^{(m)}\tilde{g}(z)=\sum_{i=1}^{\ell}a_{i}s_{i}^{m}\tilde{g}(\alpha_{i}(z))\,,
\end{equation}
for $\tilde{g}\in G$. We have that 
\begin{equation}
||T^{(m)}\tilde{g}||_{\infty}\le\sum_{i=1}^{\ell}|a_{i}||s_{i}|^{m}||\tilde{g}||_{\infty}\,,
\end{equation}
so that the operator norm $||T^{(m)}||\le\sum_{i=1}^{\ell}|a_{i}||s_{i}|^{m}$.
An immediate consequence is that there exists $m\ge0$ such that $||T^{(m)}||<1$,
a contraction. Finally, using Cauchy estimates, we see that if $g\in G$,
then $g^{(m)}\alpha_{i}$ is also in $G$ and, moreover, $||g^{(m)}\alpha_{i}||_{\infty}\le K||g||_{\infty}$,
where $K=m!$ $\delta^{-(m+1)}$, for $i=1,\dots,\ell$.

From these results, we may readily show that all non-trivial fixed
points of $T$ in $G$ are polynomials. The proof is rather elegant.
Indeed, suppose $g\in G$ is a fixed point of $T$, with $g\not=0$.
Then, for some $m\ge0$, $0$ $\le$ $||g^{(m)}||_{\infty}$ $=$
$||T^{(m)}g^{(m)}||_{\infty}$ $\le$ $||T^{(m)}||\,||g^{(m)}||_{\infty}$
$<$ $||g^{(m)}||_{\infty}$, a contradiction. It follows that $g$
is zero or a polynomial of degree at most $m-1$.

Whether or not $T$ has a polynomial fixed point depends on the precise
values of the $a_{i}$ and $\alpha_{i}$. Indeed, for a polynomial
$p(x)=p_{0}+p_{1}x+\dots+p_{m}x^{m}$, $p_{m}\not=0$, it is clear
that $Tp$ is a polynomial of degree at most $m$. Inspecting the
coefficient of $x^{m}$ in $Tp(x)=p(x)$, we have 
\begin{equation}
a_{1}s_{1}^{m}+a_{2}s_{2}^{m}+\dots+a_{\ell}s_{\ell}^{m}=1\label{eq:degree-m-relation}
\end{equation}
which is clearly a necessary condition for a polynomial fixed point
of degree $m$. Conversely, suppose that~\eqref{eq:degree-m-relation}
holds. Then if $p(x)$ is of degree $m$, $\overline{T}p$ is of degree
at most $m-1$, and so $\overline{T}$ is degenerate and has non-trivial
kernel. If $q$ is in the kernel, then $Tq=q$. Hence $T$ has a non-trivial
space of polynomial fixed points if, and only if, (\ref{eq:degree-m-relation})
holds for one or more $m\ge0$. Note that there is some $N>0$ such
that the condition does not hold for any $m\ge N$, and so the space
of polynomial fixed points of $T$ is of bounded maximum degree.

We now assume that there are no polynomial fixed points, ie that 
\begin{equation}
a_{1}s_{1}^{j}+a_{2}s_{2}^{j}+\dots+a_{\ell}s_{\ell}^{j}\not=1\,,\quad\textrm{for all \ensuremath{j\ge0}.}\label{eq:no-poly-soln}
\end{equation}
It is now evident that the only non-zero fixed points are necessarily
singular on $D$. In what follows we restrict ourselves to unbounded
isolated and logarithmic singularities.

Let us consider first simple seeds $f$ with unbounded isolated singularities.
Since $f$ is simple, every point of $\unb_{D}{f}$ is a fixed point
of a unique $\alpha_{i}$. Without loss of generality, we let $i=1$
and we suppose that $f$ has an isolated singularity at $z_{1}$,
so that $f_{|D-\{z_{1}\}}$ is analytic on some neighbourhood of $z_{1}$.
Let $C_{\epsilon}$ be a circle of radius $\epsilon>0$ about $z_{1}$.
Then if $\alpha_{i}$ is fixed point independent of the other $\alpha_{j}$
on $D$, and for $\epsilon$ sufficiently small, $f$ is analytic
inside and on $\alpha_{i}(C_{\epsilon})$ for $i=2,\dots,\ell$. Using
the fact that $f-Tf$ is analytic and integrating along $C_{\epsilon}$,
we have, for integer $k\ge0$, 
\begin{align}
0 & =\int_{C_{\epsilon}}(z-z_{1})^{k}(f(z)-Tf(z))dz=\int_{C_{\epsilon}}(z-z_{1})^{k}(f(z)-a_{1}f(\alpha_{1}(z)))dz\\
 & =\left(\int_{C_{\epsilon}}(z-z_{1})^{k}f(z)dz-\int_{C_{\epsilon}}(z-z_{1})^{k}a_{1}f(\alpha_{1}(z))dz\right)\\
 & =\left(\int_{C_{\epsilon}}(z-z_{1})^{k}f(z)dz-\int_{\alpha_{1}^{-1}C_{\epsilon}}s_{1}^{-(k+1)}(w-z_{1})^{k}a_{1}f(w)dw\right)\\
 & =2\pi if_{-(k+1)}\left(1-s_{1}^{-(k+1)}a_{1}\right)\,.
\end{align}
In this calculation we have used Cauchy's integral theorem, together
with a change of variable $w=\alpha_{1}(z)$. We have denoted by $f_{-(k+1)}$
the $(k+1)-$th coefficient in the Laurent expansion of $f$ about
$z_{1}$.

We conclude for $k\ge0$ that either $f_{-(k+1)}=0$ or $a_{1}=s_{1}^{(k+1)}$.
If the latter condition holds, then we may have $f_{-(k+1)}\not=0$
and $f(z)=(z-z_{1})^{-(k+1)}$ is a seed function, from which a fixed
point of $T$ may be constructed, provided equation~\eqref{eq:no-poly-soln}
holds. The construction is omitted here as it is similar to that given
below for the logarithmic case.

The result also shows that essential singularities do not lead to
fixed points of affine CSOs. For $\left(1-s_{1}^{-(k+1)}a_{1}\right)=0$
cannot hold for more than one $k\ge0$, ruling out a non-finite principal
part of $f$ at $z_{1}$.

We now give the construction of a fixed point of $T$ in the case
when the seed function $f$ has a single logarithmic singularity of
the form $f(z)=\log(z-z_{i})+g(z)$, where $g\in G$$ $ and $z_{i}\in D$,
and where $i$ is one of $1$, $2$, $\dots$, $\ell$. Again, without
loss of generality, we take $i=1$.

Our first observation is that we may take $f(z)=\log(z-z_{1})$, since
if $\tilde{f}(z)=\log(z-z_{1})+g(z)$, where $g\in G$, then $\widehat{T}f=\widehat{T}\tilde{f}$.
(Any convenient branch of the logarithm function may be taken, although,
to be specific, we choose the principal branch.) Again we let $\alpha_{1}$
be fixed point independent on $D$ so that, for $j\not=1$, $z_{1}\not\in\overline{\alpha_{j}(D)}$.
Therefore $f(z)-Tf(z)=\log(z-z_{1})-a_{1}\log(\alpha_{1}(z)-z_{1})+g(z)$,
where $g\in G$, and, since $\log(z-z_{1})-a_{1}\log(\alpha_{1}(z)-z_{1})$
$=\log(z-z_{1})-a_{1}\log(z_{1}+s_{1}(z-z_{1})-z_{1})$ $=(1-a_{1})\log(z-z_{1})-a_{1}\log s_{1}$,
it follows that $f-Tf\in G$ if, and only if, $a_{1}=1$.

Let us now assume that $a_{1}=1$ and $f(z)=\log(z-z_{1})$. For convenience
we consider separately the cases when $T$ is a contraction on $G$
and when $T$ is not a contraction on $G$.

The first case is easily handled directly by appealing to Theorem~\ref{thm:fixed_point_thm}.
Let $F=\langle\log(z-z_{1})\rangle\oplus G$. Then $T:F\to F$ satisfies
the hypothesis of Theorem~\ref{thm:fixed_point_thm}, from which
we conclude immediately the construction of a one-dimensional subspace
of fixed-points of $F$ in $F$ with logarithmic singularity $\log(z-z_{1})$.

The second case may be handled by differentiating the operator $T$,
say $m$ times, until it is a contraction, appealing to Theorem~\ref{thm:fixed_point_thm}
for the induced operator $T^{(m)}$, and then integrating up to obtain
a fixed point of $T$. Specifically, let $m\ge1$ be such that $\sum_{i=1}^{\ell}|a_{i}||s_{i}|^{m}<K<1$
and let $f_{m}(z)=(m-1)!(-1)^{m-1}(z-z_{1})^{-m}$. Then 
\begin{equation}
T^{(m)}f_{m}(z)-f_{m}(z)=\sum_{i=1}^{\ell}a_{i}s_{i}^{m}f_{m}(\alpha_{i}(z))-f_{m}(z)=\sum_{i=2}^{\ell}a_{i}s_{i}^{m}f_{m}(\alpha_{i}(z))\,,
\end{equation}
as may readily be ascertained by direct calculation. The right-hand
side is in $G$ (since $z_{1}\not\in\overline{\alpha_{i}(D)}$) for
$i=2,\dots,\ell$, so that $f_{m}$ is a seed function for $T^{(m)}$.
Moreover, $I-T^{(m)}$ is invertible in $G$ because $||T^{(m)}||\le K<1$.
We may therefore apply Theorem~\ref{thm:fixed_point_thm} with $F$
$=$ $<f_{m}>$ $\oplus$ $G$ to obtain a one-dimensional subspace
of fixed points $<\widehat{f_{m}}>$ of $T^{(m)}$ in $F$.

To obtain a fixed point of $T$, we integrate $m$ times, although
we must then handle a polynomial of degree at most $m-1$ that arises
from the constants of integration. Specifically, let us define the
integration operator $I:G\to G$ by the integral on the line segment
$[0,z]$ for $z\in D$: 
\begin{equation}
I(g)(z)=\int_{0}^{z}g(w)dw\,.
\end{equation}
Denoting the $m-$th iterate of $I$ by $I^{m}$, and noting that
$\widehat{f_{m}}-f_{m}\in G$, we may define the function $f+I^{m}(\widehat{f_{m}}-f_{m})$
which we denote $\hat{f}$. The function $\hat{f}$ is not necessarily
a fixed point of $T$. However, differentiating $T\hat{f}-\hat{f}$
$m$ times, we obtain 
\begin{align}
\left(T\hat{f}-\hat{f}\right)^{(m)} & =\left(Tf-f+TI^{m}(\widehat{f_{m}}-f_{m})-I^{m}(\widehat{f_{m}}-f_{m})\right)^{(m)}\\
 & =(T^{(m)}f_{m}-f_{m})+T^{(m)}(\widehat{f_{m}}-f_{m})-(\widehat{f_{m}}-f_{m})\\
 & =0\,,
\end{align}
since $T^{(m)}\widehat{f_{m}}=\widehat{f_{m}}$. It follows that $T\hat{f}-\hat{f}=q_{m}$,
where $q_{m}$ is a polynomial of degree at most $m-1$. Now let $p_{m}=(I-T)^{-1}q_{m}$,
a polynomial of degree at most $m-1$, the inverse existing because
of~\eqref{eq:no-poly-soln}. Then we have immediately that $T(\widehat{f}+p_{m})-(\widehat{f}+p_{m})=0$,
so that $\widehat{f}+p_{m}$ is a fixed point of $T$.

We have therefore proved the following theorem: \begin{theorem} Let $T$
be an affine Composition Sum Operator given by 
\[
T(f)(z)=\sum_{i=1}^{\ell}a_{i}f(\alpha_{i}(z))\,,
\]
where $\ell\ge2$ is an integer, and for $i=1,\dots,\ell$, $a_{i}$
$\in\bbbc$, and $\alpha_{i}(z)=s_{i}(z-z_{i})+z_{i}$ are affine
contractions. Let $R>0$ be such that there exists $\delta>0$ with
$\overline{\alpha_{i}(D_{R+\delta})}\subseteq D_{R-\delta}$ for $i=1,\dots,\ell$

Then 
\begin{enumerate}
\item $T$ has a fixed point which is a non-zero polynomial if and only
if $a_{1}s_{1}^{m}+a_{2}s_{2}^{m}+\dots+a_{\ell}s_{\ell}^{m}=1$ for
some integer $m\ge0$. If there are polynomial fixed points, there
is also a maximum integer $m$ satisfying the constraint, and all
the fixed points are then of degree at most $m$. 
\item If there are no polynomial fixed points%
\footnote{This condition guarantees the invertibility of $I-T$, and hence the
existence of PESL fixed points. However if polynomial fixed points
do exist, the possibility of PESL fixed points is not ruled out, and
if they do exist they will satisfy the conditions given above for
$a_{i}$. %
}, but for some $1\le i\le l$, $\alpha_{i}$ is fixed point independent
on $D_{R}$ (ie $z_{i}\not\in\bigcup_{j\ne i}\overline{\alpha_{j}(D_{R})}$),
then:

\begin{enumerate}
\item If $a_{i}=s_{i}^{k}$, for some $k\ge1$, then $T$ has a fixed point
$f$ of the form 
\[
f(z)=(z-z_{i})^{-k}+g(z)
\]
where $g\in G$ is analytic and bounded in $D_{R}$. 
\item If $a_{i}=1$, then $T$ has a fixed point $f$ of the form 
\[
f(z)=\log(z-z_{i})+g(z)
\]
where $g\in G$ is analytic and bounded in $D_{R}$. 
\end{enumerate}
\end{enumerate}
Moreover if every $\alpha_{i}$ of $T$ is fixed point independent
on $D_{R}$, then every simple fixed point of $T$ which is of PESL%
\footnote{Having only Pole/Essential/Simple Log singularities - see \ref{PESL}%
} type is necessarily a linear combination of fixed points satisfying
the conditions above.

\end{theorem}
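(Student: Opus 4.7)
The plan is to subtract a suitable linear combination $L$ of the fixed points constructed in parts (1), (2a) and (2b) from a given simple PESL fixed point $f$ so that $f-L\in G$, and then invoke the polynomial dichotomy established at the start of this section to conclude that $f-L$ is either zero or a polynomial fixed point already covered by part (1). The argument proceeds by extracting the principal and logarithmic parts of $f$ at each of its singular points.

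First I would apply Lemma~\ref{lem:UnboundedPoints}: because $f$ is simple and each $\alpha_i$ is fixed point independent with fixed point $z_i$, we have $\unb_{D_R}(f)\subseteq\{z_1,\dots,z_\ell\}$, and every singular $z_i$ is the fixed point of exactly one $\alpha_i$. Essential singularities have already been excluded earlier in the section, so the PESL hypothesis allows us to write, locally near each such $z_i$,
\[
f(z)=\sum_{j=1}^{N_i}c_{i,j}(z-z_i)^{-j}+b_i\log(z-z_i)+h_i(z),
\]
with $h_i$ analytic at $z_i$.

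Next I would reapply, at each $z_i$, the local calculations already done in this section. Fixed point independence makes $f\circ\alpha_j$ analytic on a small circle $C_\epsilon$ about $z_i$ for $j\ne i$, so the identity $\int_{C_\epsilon}(z-z_i)^k(f-Tf)\,dz=0$ collapses to $c_{i,k+1}(1-s_i^{-(k+1)}a_i)=0$ for every $k\ge 0$; hence $c_{i,k+1}\ne 0$ forces $a_i=s_i^{k+1}$, and since $|s_i|<1$ at most one such $k$ exists, so the pole part at $z_i$ reduces to a single term $c_i(z-z_i)^{-k_i}$ with $a_i=s_i^{k_i}$. The local computation used to derive part (2b) gives $Tf(z)=a_ib_i\log(z-z_i)+(\text{analytic near }z_i)$, so matching log parts in $f=Tf$ forces $b_i(1-a_i)=0$. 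Since $|s_i|<1$ precludes $s_i^{k_i}=1$, only one of the pole or log alternatives can occur at any given $z_i$.

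Finally I would form $L=\sum_i c_i F_i^{\mathrm{pole}}+\sum_i b_i F_i^{\mathrm{log}}$, summed over the contributing indices, where $F_i^{\mathrm{pole}}$ and $F_i^{\mathrm{log}}$ denote any choice of the fixed points supplied by parts (2a) and (2b). Then $L$ is a fixed point of $T$ whose singular structure matches that of $f$ at every $z_i$, so $f-L$ is a fixed point of $T$ with no unbounded singularities in $D_R$, i.e.\ $f-L\in G$; the polynomial dichotomy then forces $f-L$ to be either zero or a polynomial fixed point, both accounted for by part (1). The main technical subtlety is the isolation of the logarithmic coefficient $b_i$: unlike the Laurent coefficients it is not extracted by a contour integral but by a direct comparison of the multivalued $\log(z-z_i)$ parts of both sides of $f=Tf$, using fixed point independence to discard the contributions of $\alpha_j$ for $j\ne i$. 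Everything else is a direct consequence of the local analyses already carried out.
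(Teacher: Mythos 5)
Your treatment of the final ``Moreover'' clause is correct and actually makes explicit a step the paper leaves implicit: locate the singularities via Lemma~\ref{lem:UnboundedPoints}, compare the local expansions of $f$ and $Tf$ at each singular $z_i$ (the pole part forces $a_i=s_i^{k_i}$ with at most one surviving order since $|s_i|<1$, the log part forces $a_i=1$, and the two alternatives are mutually exclusive), subtract a matching combination $L$ of singular fixed points, and apply the polynomial dichotomy to the analytic remainder $f-L\in G$. Two small technical remarks: when a log term is present $f$ is not single-valued on $C_\epsilon$, so the contour-integral extraction of the Laurent coefficients needs a limit $\epsilon\to 0$ (or, more cleanly, just the uniqueness of the local decomposition into pole, log and analytic parts applied to $f=Tf$ directly); and the analyticity of $f\circ\alpha_j$ near $z_i$ for $j\ne i$ rests on fixed point independence, which you do invoke correctly.

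The genuine gap is in parts (2a) and (2b): you build $L$ out of ``the fixed points supplied by parts (2a) and (2b)'', but their existence is precisely what the theorem asserts, and it is not a ``direct consequence of the local analyses''. The local computation only shows that the candidates $f(z)=(z-z_i)^{-k}$ (when $a_i=s_i^k$) and $f(z)=\log(z-z_i)$ (when $a_i=1$) are \emph{seeds}, i.e.\ that $\overline{T}f\in G$. To convert a seed into a fixed point one must invert $\overline{T}$ on $G$ and apply Theorem~\ref{thm:fixed_point_thm}, and $T$ need not be a contraction on $G$. The paper's construction handles this by differentiating $m$ times to obtain the induced operator $T^{(m)}$ with $\bigl\Vert T^{(m)}\bigr\Vert\le\sum_i|a_i||s_i|^m<1$, applying Theorem~\ref{thm:fixed_point_thm} to the seed $f_m(z)=(m-1)!(-1)^{m-1}(z-z_1)^{-m}$, integrating back up $m$ times, and then correcting the resulting defect $T\hat f-\hat f=q_m$ (a polynomial of degree at most $m-1$) by adding $p_m=(I-T)^{-1}q_m$; it is exactly in inverting $I-T$ on polynomials that the hypothesis~\eqref{eq:no-poly-soln} of ``no polynomial fixed points'' enters. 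Without this global construction the existence halves of (2a) and (2b) are unproved, your $L$ has nothing to be built from, and the role of the no-polynomial-fixed-point hypothesis in the theorem is left unexplained.
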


\section{\label{sub:CaseStudy}Applications to problems arising from renormalisation
theory}

We now consider further the operator $M$ given in~\eqref{eq:M}
above and we discuss the construction of fixed points of $M$. The
approach we adopt differs from that in the previous section in that
we work directly from the operator $M$, modifying it by subtracting
a constant CSO to obtain a contraction. This is more in the spirit
of the work in \cite{Mestel2000,Dalton-Mestel2003}. We start by developing
a general theory of CSOs acting on $\ell_{1}$ spaces of analytic
functions, a theory which is complementary to that developed in Sections~3
and~4.

\subsection{CSOs acting on $\ell_{1}$ spaces of analytic functions}

For $R>0$, let $G_{R}$ denote the complex Banach space of analytic
functions on the open disc $D_{R}=\{z:\,|z|<R\}$ with finite $\ell_{1}-$norm
$\left\Vert \sum_{n=0}^{\infty}c_{n}z^{n}\right\Vert _{R}=\sum_{n=0}^{\infty}|c_{n}|R^{n}$.
For $n\ge0$, we denote by $Z_{n}$ the basis function $Z_{n}:\, z\longmapsto z^{n}$,
which has norm $R^{n}$. The set $\{Z_{n}\;:\; n=0,1,2,\dots\}$ forms
a basis for $G_{R}$. We note the following standard lemma, which
we include for completeness.

\begin{lemma} \label{R:TonZn} Let $T$ be a bounded linear operator
on the Banach space $G_{R}$ of analytic functions and let $K>0$.
Then the induced operator norm $\bigl\Vert T\bigr\Vert_{R}\le K$
if, and only if, $\bigl\Vert TZ_{k}\bigr\Vert_{R}\le K\left\Vert Z_{k}\right\Vert $
for all $k\ge0$. The result also holds when the inequality is replaced
with a strict inequality. \end{lemma}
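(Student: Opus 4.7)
The plan is to exploit the $\ell_{1}$ structure of the norm on $G_{R}$, under which the monomial basis $\{Z_{n}\}$ behaves very rigidly: for any absolutely convergent expansion $f = \sum c_{n} Z_{n}$, the norm is given exactly by $\|f\|_{R} = \sum |c_{n}| R^{n}$, without any loss in the triangle inequality. One direction of the equivalence is then immediate from the definition of the operator norm: if $\|T\|_{R} \le K$, then $\|TZ_{k}\|_{R} \le \|T\|_{R}\,\|Z_{k}\|_{R} \le K\|Z_{k}\|_{R}$ for every $k \ge 0$, and likewise with strict inequalities on both sides when $\|T\|_{R} < K$.

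For the reverse direction, I would take an arbitrary $f = \sum_{n=0}^{\infty} c_{n} Z_{n} \in G_{R}$ and first observe, using boundedness (hence continuity) of $T$ applied to the partial sums of the basis expansion, that
\begin{equation*}
Tf \;=\; \sum_{n=0}^{\infty} c_{n}\, TZ_{n},
\end{equation*}
the series converging in $G_{R}$ because $\sum_{n} |c_{n}|\|TZ_{n}\|_{R} \le K \sum_{n} |c_{n}|\|Z_{n}\|_{R} = K\|f\|_{R} < \infty$. The triangle inequality applied term-by-term to this expansion then gives $\|Tf\|_{R} \le \sum_{n} |c_{n}|\|TZ_{n}\|_{R} \le K\|f\|_{R}$, from which $\|T\|_{R} \le K$.

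Combining both directions yields the sharper identity $\|T\|_{R} = \sup_{k \ge 0} \|TZ_{k}\|_{R}/\|Z_{k}\|_{R}$, and the strict version of the lemma is then just this identity read off at the level of suprema: $\|T\|_{R} < K$ precisely when the supremum is $<K$, which is the form in which the result will be invoked in the sequel (where a uniform strict bound of the type $\sum |a_{i}||s_{i}|^{m} < K$ is available from the CSO coefficients). The only step that requires any care is the interchange of $T$ with the infinite series defining $f$, but this is immediate from continuity of $T$ together with the absolute summability just noted, so I do not expect any genuine obstacle in carrying the proof through.
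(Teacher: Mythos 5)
Your argument is correct and is essentially the paper's own proof: the forward direction is immediate from the definition of the operator norm, and the converse expands $f=\sum_n c_n Z_n$, commutes $T$ past the series by continuity, and uses the exact $\ell_1$ form of the norm $\|f\|_R=\sum_n |c_n|R^n$ to get $\|Tf\|_R\le K\|f\|_R$. Your closing remark on the strict case is in fact more careful than the paper (whose proof ignores it): the reverse implication with termwise strict inequalities is only valid in the supremum form $\sup_{k}\|TZ_k\|_R/\|Z_k\|_R<K$, which is the form actually available in the later applications.
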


It follows that $T$ is a contraction on $G_{R}$ with contraction
rate $K<1$ if, and only if, it contracts each basis function $Z_{k}$
with contraction rate $K$.

\begin{proof} First suppose $\bigl\Vert T\bigr\Vert_{R}\le K$. Since
$Z_{k}\in G_{R}$, $\bigl\Vert TZ_{k}\bigr\Vert_{R}$ $\le$ $\bigl\Vert T\bigr\Vert_{R}\bigl\Vert Z_{k}\bigr\Vert_{R}$
$\le$ $K.R^{k}$, as required. We now prove the converse. Let $f=\sum_{r=0}^{\infty}a_{r}Z_{r}\in G_{R}$.
Since $T$ is bounded, hence continuous, $Tf=T\lim_{n\to\infty}\sum_{r=0}^{n}a_{r}Z_{r}=\lim_{n\to\infty}T\sum_{r=0}^{n}a_{r}Z_{r}=\lim_{n\to\infty}\sum_{r=0}^{n}a_{r}TZ_{r}$
and so it follows that $\bigl\Vert Tf\bigr\Vert_{R}$ $=$ $\lim_{n\to\infty}\bigl\Vert\sum_{r=0}^{n}a_{r}TZ_{r}\bigr\Vert_{R}$
$\le$ $\lim_{k\to\infty}\sum_{r=0}^{k}|a_{r}|KR^{r}$ $=$ $K\bigl\Vert f\bigr\Vert_{R}$
whence $\bigl\Vert T\bigr\Vert_{R}\le K$, as claimed. This completes
the proof. \end{proof}

One particular feature of affine CSOs is that they are contractions
on the basis functions $Z_{k}$ for $k$ sufficiently large, as is
shown by the following result.

\begin{lemma} Let $T=\sum_{i=1}^{\ell}a_{i}\alpha_{i}^{*}$ be a CSO
with $a_{i}$ constant (and non-zero), $\alpha_{i}(z)=s_{i}z+t_{i}$
where $|s_{i}|<1$. Let $s=\max_{i}\{|s_{i}|\}$, and let $\mu\in\bbbr$
satisfy $s<\mu\le1$.

Then there exists $R_{0}\ge0$, and integer $N>1$ such that 
\begin{enumerate}
\item $\bigl\Vert TZ_{k}\bigr\Vert_{R}<\mu^{k}R^{k}$ for all $R>R_{0}$
and all $n\ge N$. 
\item For $0\le n<N$, $\left\Vert TZ_{n}\right\Vert _{R}<\mu R^{n}$ for
all $R>R_{0}$, whenever $\left|\sum_{i=1}^{\ell}a_{i}s_{i}^{k}\right|<\mu$. 
\end{enumerate}
\end{lemma}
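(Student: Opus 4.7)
The plan is to compute $TZ_{k}$ explicitly using the binomial theorem and then bound its $\ell_{1}$-norm by a simple geometric quantity in $R$. Writing
\[
TZ_{k}(z)=\sum_{i=1}^{\ell}a_{i}(s_{i}z+t_{i})^{k}=\sum_{j=0}^{k}\left(\sum_{i=1}^{\ell}a_{i}\binom{k}{j}s_{i}^{j}t_{i}^{k-j}\right)z^{j},
\]
I would apply the triangle inequality to the $\ell_{1}$-norm and recollect the binomial sum to obtain the key estimate
\[
\bigl\Vert TZ_{k}\bigr\Vert_{R}\le\sum_{i=1}^{\ell}|a_{i}|\,(|s_{i}|R+|t_{i}|)^{k}.
\]
This inequality is the workhorse for both parts.

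For part~1, I would factor out $R^{k}$ to get $\bigl\Vert TZ_{k}\bigr\Vert_{R}/R^{k}\le\sum_{i}|a_{i}|(|s_{i}|+|t_{i}|/R)^{k}$. Choose $\mu'\in(s,\mu)$; since $|s_{i}|\le s<\mu'$, there is $R_{0}^{(1)}$ so that $|s_{i}|+|t_{i}|/R<\mu'$ uniformly in $i$ whenever $R>R_{0}^{(1)}$. Then the bound becomes $\bigl\Vert TZ_{k}\bigr\Vert_{R}<\bigl(\sum_{i}|a_{i}|\bigr)\mu'^{k}R^{k}$, and since $\mu/\mu'>1$, I can pick $N$ so large that $\bigl(\sum_{i}|a_{i}|\bigr)(\mu'/\mu)^{k}<1$ for all $k\ge N$, giving $\bigl\Vert TZ_{k}\bigr\Vert_{R}<\mu^{k}R^{k}$ as required.

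For part~2, I would note that the coefficient of $z^{n}$ in $TZ_{n}$ is exactly $\sum_{i}a_{i}s_{i}^{n}$, so
\[
\frac{\bigl\Vert TZ_{n}\bigr\Vert_{R}}{R^{n}}=\left|\sum_{i=1}^{\ell}a_{i}s_{i}^{n}\right|+\sum_{j=0}^{n-1}\left|\sum_{i=1}^{\ell}a_{i}\binom{n}{j}s_{i}^{j}t_{i}^{n-j}\right|R^{j-n}.
\]
The tail is $O(R^{-1})$ as $R\to\infty$, so $\bigl\Vert TZ_{n}\bigr\Vert_{R}/R^{n}$ converges to $\bigl|\sum_{i}a_{i}s_{i}^{n}\bigr|$. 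If this limit is strictly less than $\mu$, then there is an $R_{0}^{(2,n)}$ with $\bigl\Vert TZ_{n}\bigr\Vert_{R}<\mu R^{n}$ for all $R>R_{0}^{(2,n)}$. Taking $R_{0}$ to be the maximum of $R_{0}^{(1)}$ and the finitely many $R_{0}^{(2,n)}$ for $0\le n<N$ yields the required uniform $R_{0}$.

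No step is really an obstacle here; the only subtlety is the bookkeeping needed to merge the $R_{0}$ from the two parts into a single constant and to verify that the chosen $N$ depends only on $\mu$, $s$, and $\sum_{i}|a_{i}|$, not on $R$ — which is immediate from the argument above because the convergence $(\mu'/\mu)^{k}\to 0$ is uniform in $R$ once $R>R_{0}^{(1)}$ is fixed.
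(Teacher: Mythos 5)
Your proof is correct and follows essentially the same route as the paper's: binomial expansion of $TZ_{k}$, the triangle-inequality bound $\bigl\Vert TZ_{k}\bigr\Vert_{R}\le\sum_{i}|a_{i}|\,(|s_{i}|R+|t_{i}|)^{k}$ for part~1, and isolation of the leading coefficient $\sum_{i}a_{i}s_{i}^{n}$ with an $O(R^{-1})$ tail for part~2. Your insertion of an intermediate $\mu'\in(s,\mu)$ is a small but worthwhile refinement, since it makes explicit that $N$ can be chosen uniformly over all $R>R_{0}$, a point the paper's own argument leaves implicit.
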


\begin{proof} We have $(TZ_{n})(z)$ $=$ $\sum_{i}a_{i}(s_{i}z+t_{i})^{n}$
$=$ $\sum_{r=0}^{n}z^{r}\sum_{i}a_{i}\binom{n}{r}s_{i}^{r}t_{i}^{n-r}$,
hence it follows that $\left\Vert TZ_{n}\right\Vert _{R}$ $\le$
$\sum_{r=0}^{n}\left|\sum_{i}a_{i}\binom{n}{r}s_{i}^{r}t_{i}^{n-r}\right|.R^{r}$
$=$ $R^{n}\sum_{r=0}^{n}\binom{n}{r}\left|\sum_{i}a_{i}s_{i}^{r}\left(\frac{t_{i}}{R}\right)^{n-r}\right|$.
Therefore $\left\Vert TZ_{n}\right\Vert _{R}\le R^{n}\sum_{i}\left|a_{i}\right|.(\left|s_{i}\right|+\left|\frac{t_{i}}{R}\right|)^{n}$
from which we see that if $\left|s_{i}\right|+\left|\frac{t_{i}}{R}\right|<\mu\le1$
for all $i$, then we can find $N\ge0$ so that $\left\Vert TZ_{n}\right\Vert <\mu^{n}R^{n}$
for $n\ge N$. The condition on $R$ equates to $R_{0}=\max_{i}\{\frac{|t_{i}|}{\mu-|s_{i}|}\}$.

We now consider $n<N$ for $N>1$. From above $\left\Vert TZ_{n}\right\Vert _{R}\le R^{n}\left(\left|\sum_{i}a_{i}s_{i}^{n}\right|+\sum_{r=0}^{n-1}\binom{n}{r}\left|\sum_{i}a_{i}s_{i}^{r}\left(\frac{t_{i}}{R}\right)^{n-r}\right|\right)$.

Since $n$ is now bounded, for any $\epsilon>0$, we can choose $R$
large enough to give $\left\Vert TZ_{n}\right\Vert _{R}\le R^{n}\left(\left|\sum_{i}a_{i}s_{i}^{n}\right|+\epsilon\right)$.

Hence if $\left|\sum_{i}a_{i}s_{i}^{n}\right|<\mu\le1$ for $0\le n<N$,
we will have $\left\Vert TZ_{n}\right\Vert _{R}<\mu R^{n}$ for large
enough $R$. \end{proof}

The following corollaries are immediate. \begin{corollary}\label{cor:TcontractionforlargeR}
If $T$ is an affine CSO with $|s_{i}|<1$ and, for each $n\ge0$,
$\left|\sum_{i}a_{i}s_{i}^{n}\right|<1$, then $T$ is a contraction
on $G_{R}$ for large enough $R$. \end{corollary}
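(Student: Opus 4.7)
The plan is to combine Lemma~\ref{R:TonZn}, which reduces $\|T\|_R<1$ to a uniform bound on each $\|TZ_n\|_R$, with the preceding lemma, which supplies such bounds in two regimes (large $n$ via the binomial tail, small $n$ via the coefficient condition). The obstacle is that the preceding lemma is parametrised by a single constant $\mu\in(\max_i|s_i|,1]$ that must serve \emph{simultaneously} for $n\ge N$ and for $0\le n<N$, while $N$ itself is only determined once $\mu$ has been fixed. So the first job is to produce a $\mu<1$ that works uniformly in both regimes.

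First I would set $s=\max_i|s_i|<1$ and consider the sequence $c_n=\left|\sum_{i=1}^{\ell}a_i s_i^n\right|$. Since $|s_i|<1$ for every $i$ and the sum is finite, $c_n\to 0$ as $n\to\infty$, so the supremum $c=\sup_{n\ge 0}c_n$ is attained at some finite index. By hypothesis every $c_n<1$, and therefore $c<1$. I would then fix any $\mu$ with $\max(s,c)<\mu<1$.

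With this $\mu$, part~(1) of the preceding lemma yields $R_0\ge 0$ and an integer $N>1$ such that $\|TZ_n\|_R<\mu^n R^n\le\mu R^n$ for all $R>R_0$ and all $n\ge N$. For the finite range $0\le n<N$ the inequality $c_n\le c<\mu$ is exactly the hypothesis that part~(2) requires, so (after possibly enlarging $R_0$ to handle the finitely many initial indices) the same bound $\|TZ_n\|_R<\mu R^n$ holds for these $n$ as well. Thus $\|TZ_n\|_R\le\mu\,\|Z_n\|_R$ for every $n\ge 0$ and every sufficiently large $R$, and Lemma~\ref{R:TonZn} then delivers $\|T\|_R\le\mu<1$, establishing the claimed contraction on $G_R$.

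The main obstacle is precisely the circular dependence between $\mu$ and $N$ noted above; resolving it relies on the key observation that the pointwise hypothesis $c_n<1$ automatically upgrades to a uniform bound $c<1$, because $(c_n)$ decays to $0$ and its supremum is therefore attained at a finite index. Once this is in hand, the rest is a direct combination of the two preceding lemmas.
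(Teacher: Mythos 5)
Your proof is correct and follows the route the paper intends: the paper simply declares the corollary ``immediate'' from Lemma~\ref{R:TonZn} and the preceding lemma, and your argument is the natural filling-in of that claim. The one detail the paper elides --- that the pointwise hypothesis $\left|\sum_i a_i s_i^n\right|<1$ upgrades to a uniform bound $c<1$ because the sequence tends to $0$, so a single $\mu\in(\max(s,c),1)$ serves both regimes of the lemma --- is exactly the point you identify and resolve correctly.
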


\begin{corollary}\label{cor:TcontractionforlargeR-1} If $T$ is an affine
CSO with $|s_{i}|<1$ and, for each $n\ge1$, $\left|\sum_{i}a_{i}s_{i}^{n}\right|<1$,
then $T_{c}$ is a contraction on $G_{R}$ for large enough $R$,
where $c$ is a constant and $T_{c}f=Tf-c^{*}(Tf)=\sum_{i=1}^{\ell}a_{i}f\alpha_{i}-\sum_{i=1}^{\ell}a_{i}f\alpha_{i}(c)$.
\end{corollary}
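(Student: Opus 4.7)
The plan is to apply Lemma~\ref{R:TonZn} directly to $T_c$, verifying that $\|T_cZ_n\|_R\le\tilde\mu R^n$ for some $\tilde\mu<1$ and all $n\ge0$, once $R$ is taken sufficiently large. The pivotal observation motivating the definition of $T_c$ is that it annihilates the constants:
\[
T_cZ_0 \;=\; TZ_0 - (TZ_0)(c) \;=\; \sum_i a_i - \sum_i a_i \;=\; 0.
\]
This is precisely what permits weakening the hypothesis from $|\sum_i a_i s_i^n|<1$ for all $n\ge 0$ (as in Corollary~\ref{cor:TcontractionforlargeR}) to the same condition only for $n\ge1$: the one case where the bound could fail, $n=0$, becomes irrelevant.

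For $n\ge1$, since $(TZ_n)(c)$ is a constant whose $G_R$-norm equals its modulus, I split
\[
\|T_cZ_n\|_R \;\le\; \|TZ_n\|_R + |TZ_n(c)|.
\]
The first summand is controlled by the preceding lemma. Let $s=\max_i|s_i|<1$ and $\beta=\sup_{n\ge1}\bigl|\sum_i a_i s_i^n\bigr|$; since $|s_i|<1$ forces $\sum_i a_i s_i^n\to 0$, the supremum is attained at some finite $n$, so $\beta<1$. Pick $\mu\in(\max(s,\beta),1)$. The preceding lemma then produces $R_0$ and $N$ such that, for every $R>R_0$, we have $\|TZ_n\|_R<\mu R^n$ for $1\le n<N$ and $\|TZ_n\|_R<\mu^n R^n\le\mu R^n$ for $n\ge N$. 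Thus $\|TZ_n\|_R\le\mu R^n$ uniformly for $n\ge1$.

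For the second summand, $|TZ_n(c)|\le\sum_i|a_i||s_ic+t_i|^n$. Given any $\rho\in(0,1)$, I can enlarge $R$ so that $|s_ic+t_i|/R<\rho$ for every $i$, which gives $|TZ_n(c)|\le A\rho^n R^n\le A\rho R^n$ for $n\ge1$, where $A=\sum_i|a_i|$. Combining,
\[
\|T_cZ_n\|_R \;\le\; (\mu+A\rho)\,R^n \qquad (n\ge 1).
\]
Fixing $\rho<(1-\mu)/A$ and then choosing $R$ large enough to satisfy both $R>R_0$ and the $\rho$-bound on $|s_ic+t_i|/R$, we obtain $\tilde\mu:=\mu+A\rho<1$ and $\|T_cZ_n\|_R\le\tilde\mu R^n$ for every $n\ge0$. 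Lemma~\ref{R:TonZn} then yields $\|T_c\|_R\le\tilde\mu<1$.

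The only real subtlety is the order in which the constants must be fixed: first $\mu$ (using that $\beta<1$ is a genuine consequence of $|s_i|<1$), then $\rho$ (small enough that $\mu+A\rho<1$), and finally $R$ (large enough to serve both the preceding lemma and the $\rho$-bound on $|s_ic+t_i|/R$). Beyond this scheduling the estimates are completely routine.
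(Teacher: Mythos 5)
Your proof is correct, but it takes a more hands-on route than the paper. The paper's own argument is a two-line reduction: it observes that $T_c = \sum_i a_i\alpha_i^* - \sum_i a_i(\alpha_i(c))^*$ is itself a (degenerate) affine CSO, the new terms being constant maps, i.e.\ affine contractions with $s=0$; consequently the quantities $\left|\sum a_i s_i^n\right|$ controlling contraction are unchanged for $n\ge1$ and become exactly $0$ for $n=0$, so Corollary~\ref{cor:TcontractionforlargeR} applies verbatim to $T_c$. You instead treat the subtracted evaluation $-(TZ_n)(c)$ as an additive perturbation and estimate $\|T_cZ_n\|_R\le\|TZ_n\|_R+|TZ_n(c)|$ directly, feeding the first term to the preceding lemma and killing the second by taking $|s_ic+t_i|/R$ small. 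Both arguments ultimately rest on the same lemma about $\|TZ_n\|_R$; what yours buys is explicitness — in particular you make the quantifier scheduling ($\mu$, then $\rho$, then $R$) and the finiteness of $\beta=\sup_{n\ge1}\left|\sum_i a_i s_i^n\right|<1$ completely transparent, points the paper leaves implicit — at the cost of missing the slicker structural observation that $T_c$ is again an affine CSO, which is what makes the paper's proof essentially free. Your identification of $T_cZ_0=0$ as the reason the $n=0$ hypothesis can be dropped is exactly the paper's key point, just expressed without the CSO formalism.
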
 \begin{proof} Note that for any constants $c$ and $a$,
$ac^{*}$ is a degenerate affine CSO with $s=0$, so that for $n\ge1$
the sum $\left|\sum_{i}a_{i}s_{i}^{n}\right|$ is unchanged between
$T$ and $T_{c}.$ But the sum is precisely $0$ for $n=0$, and so
the previous corollary can be applied to $T_{c}$.\end{proof}

For a seed function $f$, it may happen that the function $g=Tf-f\in G_{R}$
only for $R$ in a restricted range. In these circumstances it may
not be possible to apply Corollaries~\ref{cor:TcontractionforlargeR}
and~\ref{cor:TcontractionforlargeR-1} directly. Instead we may have
to iterate $T$ several times so that the domain on which $g$ is
defined is extended to include $D_{R}$ for $R$ sufficiently large
for Corollaries~\ref{cor:TcontractionforlargeR} and~\ref{cor:TcontractionforlargeR-1}
to apply. That it is possible to do this follows from the fact that
the $\alpha_{i}$ contract the whole of $\bbbc$ uniformly.

Let us first note that there exists $R_{0}\ge0$ such that $\alpha_{i}(D_{R})\subseteq(D_{R})$
for each $i=1,\dots,\ell$ and each $R\ge R_{0}$. The following domain
expansion lemma is straightforward to prove. \begin{lemma} \label{lem:Extending_T}
Let $g \in G_{R_{1}}$ for
some $R_{1}>R_{0}$. Then for each $R\ge R_{1}$ there exists an integer
$K\ge0$ such that $T^{k}g\in G_{R}$ for all $k\ge K$. \end{lemma}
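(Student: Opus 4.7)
The plan is to unpack $T^{k}g$ as a finite sum
\[
T^{k}g(z)=\sum a_{j_{1}}\cdots a_{j_{k}}\, g\bigl(\beta(z)\bigr),
\]
where the sum runs over all multi-indices $(j_{1},\ldots,j_{k})\in\{1,\ldots,\ell\}^{k}$ and $\beta=\beta_{j_{1}\ldots j_{k}}$ is the corresponding composition of length $k$ of the maps $\alpha_{j_{l}}$. Each such $\beta$ is itself an affine contraction with multiplier $\prod_{l}s_{j_{l}}$ of modulus at most $s^{k}$, where $s=\max_{j}|s_{j}|<1$. The strategy is to show that for $k$ large enough, every such $\beta$ maps a disc \emph{strictly} larger than $D_{R}$ into $D_{R_{1}}$; then each composite $g\circ\beta$ is analytic on that larger disc, and consequently $T^{k}g$ extends analytically past $D_{R}$, which is enough to force membership in $G_{R}$.

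The key step is a uniform contraction estimate. Fix any $R'$ with $R<R'$. Since each $\alpha_{j}$ leaves $D_{R_{0}}$ invariant (for $R_{0}$ as in the discussion preceding the lemma), so does every composition $\beta$, and hence the unique fixed point $z_{\beta}^{*}$ of $\beta$ lies in $\overline{D_{R_{0}}}$. For $z\in D_{R'}$ this yields
\[
|\beta(z)|\le|\beta(z)-z_{\beta}^{*}|+|z_{\beta}^{*}|\le s^{k}(R'+R_{0})+R_{0}.
\]
Since $R_{1}>R_{0}$ and $s<1$, there exists an integer $K$ such that $s^{k}(R'+R_{0})<R_{1}-R_{0}$ for all $k\ge K$, and crucially this single $K$ works \emph{simultaneously} for every one of the $\ell^{k}$ compositions of length $k$. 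Hence for $k\ge K$ each $\beta$ satisfies $\beta(D_{R'})\subseteq D_{R_{1}}$.

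The conclusion then follows quickly. For $k\ge K$, each summand $g\circ\beta$ is analytic on $D_{R'}$ because $g$ is analytic on $D_{R_{1}}$; since the sum is finite, $T^{k}g$ is analytic on $D_{R'}$. Its Taylor coefficients $c_{n}$ at the origin therefore satisfy $\limsup|c_{n}|^{1/n}\le 1/R'$, and since $R'>R$ the series $\sum|c_{n}|R^{n}$ is dominated by a convergent geometric series in $R/R'$, giving $T^{k}g\in G_{R}$. The only delicate point is the uniformity in $K$ over all multi-indices of length $k$; this is handed to us for free by the invariance of $D_{R_{0}}$, which confines every fixed point $z_{\beta}^{*}$ to the same compact set $\overline{D_{R_{0}}}$ independently of the multi-index chosen.
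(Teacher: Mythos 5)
Your proof is correct and follows essentially the same route as the paper: expand $T^{k}g$ over the $\ell^{k}$ length-$k$ compositions, observe that each is an affine contraction with multiplier of modulus at most $s^{k}$ and with its fixed point (equivalently, its constant term $\beta(0)$) confined to $\overline{D_{R_{0}}}$, and conclude that for $k$ large all of them simultaneously map a disc containing $D_{R}$ into $D_{R_{1}}$. Your extra step of passing through a strictly larger disc $D_{R'}$ to control the Taylor coefficients is a welcome refinement, since analyticity on $D_{R}$ alone would not by itself guarantee finiteness of the $\ell_{1}$-norm $\sum_{n}|c_{n}|R^{n}$ that defines membership in $G_{R}$.
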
 \begin{proof} The
proof is a straightforward consequence of the fact that the $\alpha_{i}$
contract uniformly. If $g\in G_{R}$, then the result holds with $k=0$.
Otherwise, let $k\ge1$ and consider a composition of $k$ contractions
chosen from the $\alpha_{i}$, possibly with repetition. The resulting
composition $\alpha_{i_{1}}\dots\alpha_{i_{k}}$ is an affine map
so we may write $\alpha_{i_{1}}\dots\alpha_{i_{k}}(z)=s_{i_{1}\dots i_{k}}z+t_{i_{1}\dots i_{k}}$,
for $i_{1},\dots,i_{k}\in\{1,\dots,\ell\}$. Now $t_{i_{1}\dots i_{k}}=\alpha_{i_{1}}\dots\alpha_{i_{k}}(0)\in D_{R_{0}}$.
Moreover, since the $\alpha_{i}$ contract uniformly on $\bbbc$,
the sequence $s_{i_{1}\dots i_{k}}\to0$ uniformly in $k$ as $k\to\infty$.
It follows immediately, that there must exist $k\ge1$ such that $\alpha_{i_{1}}\dots\alpha_{i_{k}}(D_{R})\subseteq D_{R_{1}}$
for all $i_{1},\dots,i_{k}\in\{1,\dots,\ell\}$. It follows that for all
$k$ large enough, we have $T^{k}g\in G_{R}$, as claimed. \end{proof}

In the application we shall consider in the next subsection, the operator
$T$ fails to be a contraction, because it is not a contraction on
constant functions. To solve this problem, for any affine CSO $T$,
we introduce a new derived operator which is a contraction, and which
shares certain fixed points with $T$. We give the construction for
general $\ell\ge2$, although in our application we shall specialise
to the binary case $\ell=2$.

For $1\le j\le\ell$, we define the operator $T_{j}$ by 
\begin{equation}
T_{j}f=Tf-\frac{1}{L}\sum_{i=1,i\not=j}^{\ell}a_{i}Tf(\alpha_{i}(z_{j}))\,,\quad L=\sum_{i=1,i\not=j}^{\ell}a_{i}\label{eq:Tj}
\end{equation}
provided $L\not=0$. We note that, if $a_{j}=1$, a fixed point of
$T_{j}$ is also a fixed point of $T$. For, suppose $T_{j}f=f$.
Then 
\begin{equation}
f(z)=Tf(z)-\frac{1}{L}\sum_{i=1,i\not=j}^{\ell}a_{i}Tf(\alpha_{i}(z_{j}))\,.\label{eq:Tjf=00003D00003D00003Df}
\end{equation}
Taking a weighted sum of this equation evaluated at $\alpha_{i}(z_{j})$,
gives 
\begin{equation}
\sum_{i=1,i\not=j}^{\ell}a_{i}f(\alpha_{i}(z_{j}))=\sum_{i=1,i\not=j}^{\ell}a_{i}\left(Tf(\alpha_{i}(z_{j}))-\frac{1}{L}\sum_{k=1,k\not=j}^{\ell}a_{k}Tf(\alpha_{k}(z_{j}))\right)=0\,.\label{eq:sum_ai_f_alpha_zi}
\end{equation}
Hence, using $\alpha_{j}(z_{j})=z_{j}$ we obtain 
\begin{equation}
\begin{split}
f(z_{j})=Tf(z_{j})-\frac{1}{L}\sum_{i=1,i\not=j}^{\ell}a_{i}Tf(\alpha_{i}(z_{j}))  \\
=a_{j}f(z_{j})+\sum_{i=1,i\not=j}^{\ell}a_{i}f(\alpha_{i}(z_{j}))-\frac{1}{L}\sum_{i=1,i\not=j}^{\ell}a_{i}Tf(\alpha_{i}(z_{j})),
\end{split}
\end{equation}
whence, since $a_{j}=1$, 
\begin{equation}
\frac{1}{L}\sum_{i=1,i\not=j}^{\ell}a_{i}Tf(\alpha_{i}(z_{j}))=\sum_{i=1,i\not=j}^{\ell}a_{i}f(\alpha_{i}(z_{j}))=0\,,
\end{equation}
from \eqref{eq:sum_ai_f_alpha_zi}. It follows that $Tf=f$, as claimed.

If $T$ is a binary CSO, then we can write the operator $T_{j}$ as
$T_{c}$ where the modified operator $T_{c}$ is given by $T_{c}f=Tf-Tf(c)$
with $c=\alpha_{i}(z_{j})$ and where now $\{i,j\}$ is precisely
$\{1,2\}$. It is immediate that, for a binary CSO, any fixed point
$f$ of $T_{c}$ is a fixed point of $T$ and $f(c)=0$ by~\eqref{eq:sum_ai_f_alpha_zi}.

\subsection{Fixed points of the operator $M$}

We now apply the theory we have developed to find fixed points of
the operator $M$ introduced in Section~\ref{sec:seeded-fixed-point-thy}.
Recall that $M=\phi_{1}^{*}+\phi_{2}^{*}$ with $\phi_{1}(z)=-\omega z,\phi_{2}(z)=\omega^{2}z+\omega$,
where $\omega=\frac{1}{2}(\sqrt{5}-1)$. For consistency with~\cite{Mestel2000,Dalton-Mestel2003},
we use the notation $\phi_{i}=\alpha_{i}$ for $i=1,2$.

Now, in the notation used above, $s=\max(\omega,\omega^{2})=\omega$.
Let $\omega<\mu<1$. It follows that, for $N$ sufficiently large,
$M(Z_{n})\le\mu^{n}R^{n}$, for $n\ge N$ and $R\ge R_{0}$. In fact
it is readily seen that $N=2$ suffices when $R\ge1.9009$. For $0\le n\le1$,
we calculate as follows. If $n=1$, $\left|\sum_{i}\lambda_{i}s_{i}^{n}\right|=\left|(-\omega)+\omega^{2}\right|=\omega^{3}<1$
and we have a contraction for $R$ sufficiently large. It is straightforward
to verify that is sufficient to take $R\ge1.619$. We therefore need
to choose $R\ge1.9009$. However, for $n=0$, $\left|\sum_{i}\lambda_{i}s_{i}^{n}\right|=2>1$
and we do not have a contraction.

Let us consider the operator $M_{c}$ introduced at the end of the
last section and given by $M_{c}f=Mf-Mf(c)$. Since $a_{1}=a_{2}=1$,
in this case, we can take in turn $j=1,2$ and choose, in turn, $c=c_{1},c_{2}$,
where $c_{1}=\phi_{1}(1)=-\omega$ and $c_{2}=\phi_{2}(0)=\omega$.
We note that, for any $c$, $M_{c}$ is itself a (degenerate) CSO.
Indeed, $M_{c}=\phi_{1}^{*}+\phi_{2}^{*}-(\phi_{1}c)^{*}-(\phi{}_{2}c)^{*}=\sum_{1}^{4}a_{i}\alpha_{i}$.
The last two terms $(i=3,4)$ are constants, so are degenerate affine
contractions.

Let us now construct fixed points of $M_{c}$ and hence of $M$. First
we note that $M_{c}Z_{0}=0$, and, from the above calculations, we
see that $M_{c}$ contracts the functions $Z_{n}:\, z\longmapsto z^{n}$
for $n\ge1$. We deduce from Corollary~\ref{cor:TcontractionforlargeR-1}
that $M_{c}$ is a contraction on $G_{R}$ for large enough $R$.
Hence $M_{c}$ has no non-trivial analytic fixed points in $G_{R}$,
for $R$ large enough.

Our first task is to consider a space of seed functions for $M$.
Since $a_{1}$ $=$ $a_{2}$ $=$ $1$, we can look for seeds with
logarithmic singularities at $0$, $1$, the fixed points of $\phi_{1}$
and $\phi_{2}$ respectively. Indeed for simple unbounded singularities,
the space of seed functions for $M_{c}$ is the span $<\log z,\log(z-1)>$.
Hence, to find the fixed points of $M_{c}$ we apply the previous
theory to obtain a fixed point for each of the two basis seed functions
$\log z$ and $\log(z-1)$.

We calculate $M$ acting on the two basis (generalised) seed functions.
We have $M\log z=$ $\log(-\omega z)+\log(\omega^{2}z+\omega)$ $=$
$\log z+\log(1+\omega z)+b_{1}$, where $b_{1}$ is a constant. Similarly
$M\log(z-1)=\log(-\omega z-1)+\log(\omega^{2}z+\omega-1))$ $=$ $\log(z-1)+\log(1+\omega z)+b_{2}$,
$b_{2}$ constant (where we have used $1-\omega=\omega^{2}$). It
follows readily that in both cases we have (using the notation of Lemma \ref{lem:Extending_T}) $\overline{M_{c}}\log z$,
$\overline{M_{c}}\log(z-1)$ $\in G_{R_1}$, provided $1=R_0<R_1<\omega^{-1}$.
Because of the restriction on $R_1$, we cannot necessarily use Theorem~\ref{thm:fixed_point_thm}
directly, but if not we can use Corollary~\ref{cor:FixedPointThmExtension}
instead. For convenience, in what follows, we write $f(z)$ to represent one 
of the generalised seed functions $\log z$ and $\log(z-1)$ and we
write $\tilde{f}(z)=\log(1+\omega z)$.

Using Corollary~\ref{cor:TcontractionforlargeR-1} we choose $R$
sufficiently large so that $M_{c}$ is a contraction on $G_{R}$. If we can choose $1<R<\omega^{-1}$, we can use 
Theorem~\ref{thm:fixed_point_thm} directly to obtain a fixed point of $M_{c}$. Otherwise
using Lemma~\ref{lem:Extending_T}, we set $k\ge0$ such that
$M_{c}^{k}\overline{M_{c}}f\in G_{R}$. We note that in fact $k\ge1$ since
$\tilde{f}\not\in G_{R}$, because $R\ge\omega^{-1}$. It also follows
that $M_{c}^{k}(f)$ is a seed function since $\overline{M_{c}}M_{c}^{k}f\in G_{R}$.
A fixed point of $M_{c}$ is now obtained from this seed function by applying
Corollary~\ref{cor:FixedPointThmExtension}.

We can obtain some explicit expansions for the fixed points. From
the above calculations, we have $Mf=f+\tilde{f}+b$, where $b$ is
a constant, whence $M_{c}f=I_{c}f+I_{c}\tilde{f}$. Hence we readily obtain for $k\ge1$
that $M_{c}^{k}f=I_{c}f+I_{c}\tilde{f}+\sum_{n=1}^{k-1}M_{c}^{n}\tilde{f}$,
and so $\overline{M_{c}}M_{c}^{k}f=-M_{c}^{k}\tilde{f}$. We conclude
that $M_{c}^{k}\tilde{f}\in G_{R}$.

Let 
\begin{equation}
f_{*}=I_{c}f+I_{c}\tilde{f}+\sum_{n=1}^{\infty}M_{c}^{n}\tilde{f}\,.\label{eq:f*}
\end{equation}
The infinite sum converges because $M_{c}$ is a contraction on $G_{R}$
and $M_{c}^{k}\tilde{f}\in G_{R}$. Now, 
\begin{equation}
M_{c}f_{*}=M_{c}f+M_{c}\tilde{f}+\sum_{n=2}^{\infty}M_{c}^{n}\tilde{f}=I_{c}f+I_{c}\tilde{f}+\sum_{n=1}^{\infty}M_{c}^{n}\tilde{f}=f_{*}\,.
\end{equation}
since $M_{c}f=I_{c}(f+\tilde{f})$ $=$ $I_{c}f+I_{c}\tilde{f}$.

As remarked above, we choose in turn $f(z)=\log z$ and $c=$ $c_{1}=\phi_{1}(1)=-\omega$,
$f(z)=\log(z-1)$, $c_{2}=\phi_{2}(0)=\omega$. From~\eqref{eq:f*}
and noting that $M_{c}^{n}g=M^{n}(g)-(M^{n}g)(c)$, this gives a fixed
point space for $M$ of $<f_{1},f_{2}>$, where 
\begin{equation}
f_{1}(z)=\log\frac{z}{-\omega}+\sum_{n=0}^{\infty}\sum_{\underline{i}\in I^{n}}\log\frac{1+\omega\phi_{\underline{i}}z}{1+\omega\phi_{\underline{i}}(-\omega)}
\end{equation}
and 
\begin{equation}
f_{2}(z)=\log\frac{z-1}{\omega-1}+\sum_{n=0}^{\infty}\sum_{\underline{i}\in I^{n}}\log\frac{1+\omega\phi_{\underline{i}}z}{1+\omega\phi_{\underline{i}}(\omega)}\,,
\end{equation}
where $\underline{i}=(i_{1},i_{2},\ldots,i_{n}),\ I^{n}=\{1,2\}^{n},\ \phi_{\underline{i}}=\circ_{j=1}^{n}\phi_{i_{j}}$
$=$ $=\phi_{i_{1}}\dots\phi_{i_{n}}$, for $n>0$, and the identity
map when $n=0$. We note that $f_{2}$ is the fixed point already
reported by Mestel \emph{et al} in~\cite{Mestel2000}.

A particularly elegant example of a fixed point of $M$ is obtained
by putting $f_{3}=f_{1}-f_{2}$ to give the fixed point $f_{3}(z)$
$=$ $\left[\log\frac{z}{z-1}.\frac{\omega-1}{-\omega}+W\right]$
where $W=\sum_{n=0}^{\infty}\sum_{\underline{i}\in I^{n}}\log\frac{1+\omega\phi_{\underline{i}}\omega}{1+\omega\phi_{\underline{i}}(-\omega)}$.
Since this is a fixed point we also have $f_{3}=Mf_{3}=\left[M\log\frac{z}{z-1}.\frac{\omega-1}{-\omega}+2W\right]$
$=$ $\log\frac{-\omega z}{-\omega z-1}.\frac{\omega^{2}z+\omega}{\omega^{2}z+\omega-1}.\left(\frac{\omega-1}{-\omega}\right)^{2}+2W$
$=$ $\left[\log\frac{z}{z-1}.\omega^{2}+2W\right]$. Hence $W=-\log\omega$
$=\log(1+\omega)$, by the properties of $\omega$, and $f_{3}(z)=\lambda\log\frac{z}{z-1}$,
where $\lambda$ is constant. This gives us the subspace of fixed
points $<\log\frac{z}{z-1}>$ and also the identity $e^{W}=$ 
\begin{equation}
\prod_{n=0}^{\infty}\prod_{\underline{i}\in I^{n}}\left(\frac{1+\omega\phi_{\underline{i}}\omega}{1+\omega\phi_{\underline{i}}(-\omega)}\right)=1+\omega\,.
\end{equation}

Clearly we can take exponentials of the fixed points $f_{1},f_{2}$
to obtain instead fixed points of the \emph{multiplicative} functional
equation $f(z)=f(-\omega z).f(\omega^{2}z+\omega)$. The singularities
are removable and can be replaced with zeroes to obtain entire functions.
The real parts of $\exp f_{1},\exp f_{2}$ are shown in Figure \ref{fig:Graph},
and it can be seen that this is consistent with the identity $\frac{\exp f_{1}}{\exp f_{2}}=\frac{z}{z-1}$.

\begin{figure}
\noindent \begin{centering}
\includegraphics[scale=0.2]{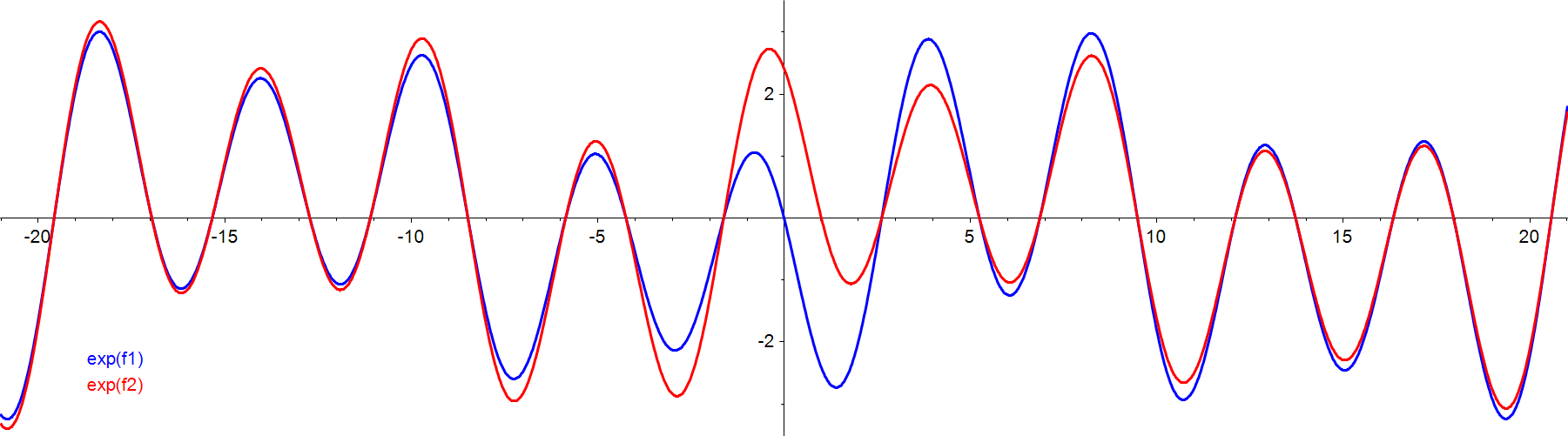} 
\par\end{centering}

\caption{\label{fig:Graph}Graph of the real parts of the two multiplicative
fixed points}
\end{figure}


\section{Conclusion and further directions for research}

In this paper we have introduced a new theory of non-zero fixed points
of linear operators $T$, showing the existence of a fixed point operator
$\widehat{T}$ derived from $T$ whose image is all the fixed points
of $T$. We have applied this theory to ``affine composition sum
operators'', a class of operators whose fixed points are important
in the study of the renormalisation of a variety of physical problems.
In particular this has enabled us, under a simple set of constraints,
to show the necessary form of fixed points which are of PESL (Pole/Essential/Simple
Log) type (see \eqref{PESL} for the full definition), these being
currently seen as the most important type. The techniques are readily
extended to functions with other types of singularity such as removable
of algebraic singularities. In addition we developed simple tests
for their existence. Finally we have applied the theory to the much
studied operator $Mf(z)=f(-\omega x)+f(\omega^{2}x+\omega)$ (see
\cite{Mestel2000,Dalton-Mestel2003} and subsequent papers) to deduce
for the first time the \emph{complete} set of \emph{simple} (see \eqref{simple})
fixed points of PESL type, including previously unknown solutions.

There are several directions for further research in this area. First,
we may extend our study to cover the full spectrum of affine CSOs.
Considered as linear operators on function spaces of analytic functions,
CSOs are compact operators and thus have discrete non-zero spectrum.
It is likely that the techniques developed in this paper may be adapted
to construct more general eigenfunctions of affine CSOs, with a view
to obtaining a full description of their spectra.

Second, it is likely that the approach of, for example,~\cite{Mestel2004a}
may be applied to understand fixed points of an affine CSO with non-simple
unbounded singularity set and, more generally, all periodic points
of an affine CSO. An full understanding of the latter is indeed necessary
for a complete description of all the fixed points of a CSO. For let
$f_{1}$, $f_{2}$ be a periodic orbit of period-2 of a CSO $T$.
Then $Tf_{1}=f_{2}$ and $Tf_{2}=f_{1}$ so that $f=f_{1}+f_{2}$
is generally a non-simple fixed point of $T$, a construction that
clearly generalises to other periods.

Third, an important future direction is to consider more general CSOs
than affine CSOs. Of course, explicit construction of fixed points
(and more general eigenfunctions) may not be in general possible for
non-affine CSOs. However a general theory may well be possible and
it may be possible to make extensive progress for special important
cases. An analogy may be drawn here with the theory of linear differential
equations. The theory of constant coefficient linear differential
equations is complete, while that for general linear equations is
less well developed except in special cases of particular interest.
Nevertheless, non-constant coefficient CSOs are of considerable interest.
For example, the full Stretch-Fold-Shear toy model studied by Gilbert~\cite{Gilbert2002,Gilbert2005}
involves a study of the spectrum of the CSO $T$ on complex-valued
$c$ functions of a real variable $x$ given by 
\begin{equation}
Tc(x)=e^{i\alpha(x-1)/2}\; c\left(\frac{x-1}{2}\right)-e^{i\alpha(1-x)/2}\; c\left(\frac{1-x}{2}\right)\,,
\end{equation}
where $\alpha\ge0$ is a real parameter, corresponding to the level
of shear in the map.

Recall that a CSO given by~\eqref{eq:CSOs} is affine if each of
the coefficients $a_{i}$ is constant and each of the maps $\alpha_{i}$
is an affine contraction. While it would certainly be interesting
to relax each of these conditions, a theory for non-constant $a_{i}$
would be of immediate application is several areas including the kinematic
dynamo theory discussed in~\cite{Gilbert2002,Gilbert2005} and in
the study of non-chaotic strange attractors~\cite{Feudel-Kuznetsov-Pikovsky2006}.

Finally, a promising area for research is to study CSOs in a wider
context than spaces of analytic functions with singularities. Indeed,
work on CSOs in spaces of piecewise constant real functions has already
found fruitful application in several fields, as detailed above. It
would be very interesting to develop a general theory of CSOs for
spaces of functions with discontinuities either of the function or
its derivatives.


\bibliographystyle{gDEA}
\bibliography{Verschueren-Mestel}


\end{document}